\documentclass{singlecol-new}
\usepackage[cp1251]{inputenc}
\usepackage[english]{babel}
\usepackage{mathrsfs}
\usepackage{float}
\usepackage{setspace}
\usepackage{geometry}
\usepackage{graphicx}
\usepackage[section]{placeins}
\usepackage[hidelinks]{hyperref}
\usepackage{natbib}
\usepackage{graphicx,epstopdf}
\usepackage[caption=false]{subfig}
\usepackage{mathtext,enumerate,float}
\usepackage{ifpdf,xcolor}
\ifpdf
  \DeclareGraphicsExtensions{.eps,.pdf,.png,.jpg}
\else
  \DeclareGraphicsExtensions{.eps}
\fi

\usepackage{verbatim}

\theoremstyle{TH}{
\newtheorem{lemma}{Lemma}
\newtheorem{theorem}[lemma]{Theorem}
\newtheorem{corrolary}[lemma]{Corrolary}

}

\theoremstyle{THrm}{
\newtheorem{definition}{Definition}[section]

\newtheorem{remark}{Remark}

}

\theoremstyle{THhit}{

}

\makeatletter

\JOURNALNAME{\TEN{\it Int. J. Computing Science and Mathematics,
Vol. \theVOL, No. \theISSUE, \thePUBYEAR\hfill\thepage}}%

\def\BottomCatch{%
\vskip -10pt
\thispagestyle{empty}%
\begin{table}[b]%
\NINE\begin{tabular*}{\textwidth}{@{\extracolsep{\fill}}lcr@{}}%
\\[-12pt]
Copyright \copyright\ 20XX Inderscience Enterprises Ltd. & &%
\end{tabular*}%
\vskip -30pt%
\end{table}%
} \makeatother

\begin{document}%


\setlength{\textfloatsep}{5pt plus 1.0pt minus 1.0pt}
\setlength{\floatsep}{4pt plus 1.0pt minus 1.0pt}
\setlength{\intextsep}{5pt plus 1.0pt minus 1.0pt}

\newcommand{\Rn}{{\mathbb R}^n}

\setcounter{page}{1}

\LRH{M.S. Filipkovska}

\RRH{Two combined methods for implicit semilinear differential equations}

\VOL{x}

\ISSUE{x}

\PUBYEAR{xxxx}

\BottomCatch

\CLline

\subtitle{}

\title{Two combined methods for the global solution of implicit semilinear differential equations with the use of spectral projectors and Taylor expansions}

\authorA{Maria S. Filipkovska} 
\affA{B. Verkin Institute for Low Temperature Physics and Engineering of the National Academy of Sciences of Ukraine,\\
47 Nauky Ave., Kharkiv, 61103, Ukraine, \\[2pt]
and\\[2pt]
V.N. Karazin Kharkiv National University,\\
4 Svobody Sq., Kharkiv, 61022,  Ukraine,\\
E-mail: filipkovskaya@ilt.kharkov.ua\\
E-mail: maria.filipk@gmail.com}

\begin{abstract}
Two combined numerical methods for solving implicit semilinear differential equations are obtained and their convergence is proved.   The comparative analysis of these methods is carried out and conclusions about the effectiveness of their application in various situations are made.  In comparison with other known methods, the obtained methods require weaker restrictions for the nonlinear part of the equation.  Also, the obtained methods enable to compute approximate solutions of the equations on any given time interval and, therefore, enable to carry out the numerical analysis of global dynamics of the corresponding mathematical models. The examples demonstrating the capabilities of the developed methods are provided.  To construct the methods we use the spectral projectors, Taylor expansions and finite differences.   Since the used spectral projectors can be easily computed, to apply the methods it is not necessary to carry out additional analytical transformations.
\end{abstract}

\KEYWORD{implicit differential equation; differential-algebraic equation; combined method; regular pencil; spectral projector; global dynamics.}

\REF{to this paper should be made as follows: Filipkovska, M.S. (xxxx) `Two combined methods for the global solution of implicit semilinear differential equations with the use of spectral projectors and Taylor expansions', {\it Int. J. Computing Science and Mathematics}, Vol.~x, No.~x,
pp.xxx--xxx.}


\maketitle

 \section{Introduction}\label{Intro}

Consider an implicit semilinear differential equation
\begin{equation}\label{DAE}
 \frac{d}{dt}[Ax]+Bx=f(t,x)
\end{equation}
with the initial condition
\begin{equation}\label{ini}
    x(t_0)=x_0,
\end{equation}
where $t,\, t_0 \ge 0$, $x,\, x_0 \in \Rn$, $f\in C([0,\infty)\times \Rn,\Rn)$, and $A,\, B\colon \Rn \to \Rn$~are linear operators (or the corresponding $n\times n$ matrices) which may be degenerate (noninvertible).   An equation of the type~\eqref{DAE} with a degenerate operator at the derivative is called a \emph{differential-algebraic equation}  (\emph{DAE}),  since a system of differential and algebraic equations corresponds to it.   In many papers and books, a DAE or a differential-algebraic system is called an algebraic-differential equation or system, degenerate differential equation, singular system, descriptor system, etc.  Differential-algebraic equations (DAEs) in descriptor form, i.e., expressed in terms of descriptor variables, or descriptor systems are used in modelling power, control, biological, chemical, and mechanical multibody systems (see, e.g., \cite{BrenanCP},  \cite{FoxJenZom}, \cite{RabierRh}, \cite{KunkelMehrmann}, \cite{EngwerdaSalmW}, \cite{Riaza}).  The presence of a degenerate operator at the derivative in the DAE means the presence of algebraic constraints, namely, the graphs of the solutions must lie in some manifold and the initial points $(t_0,x_0)$ must also belong to this manifold.  The initial value $x_0$ for the problem \eqref{DAE}, \eqref{ini} with a degenerate operator $A$ must be chosen so that the initial point $(t_0,x_0)$ belongs to the manifold
\begin{equation}\label{L_0}
L_0 = \{(t,x) \in [0,\infty) \times \Rn \mid Q_2[Bx-f(t,x)] = 0\},
\end{equation}
where $Q_2$ is the spectral projector considered in Section~\ref{IndexMeth}.  If the operator $A$ is nondegenerate (invertible), then the equation \eqref{DAE} can be written as an ordinary differential equation (ODE) and $Q_2= 0$. The DAE~\eqref{DAE} is called \emph{semilinear} and is often written in the form ${\displaystyle\frac{d}{dt}[Ax]=g(t,x)}$.  There are several reasons why we consider a semilinear DAE in the form~\eqref{DAE}. Firstly, DAEs of this form are used to describe mathematical models in radio electronics, economics, control theory, mechanics of multilink mechanisms, chemical kinetics, ecology and other fields (see references in Section~\ref{Appl}).  Secondly, in the equation~\eqref{DAE}, not only the operator $A$ but also the operator $B$ may be degenerate, and the influence of the linear part $\displaystyle\frac{d}{dt}[Ax]+Bx$ of the equation is determined by the properties of the pencil $\lambda A+B$.   It is assumed that $\lambda A+B$ is a regular pencil of index not higher than~1  (see Section~\ref{IndexMeth}). Then there exist the spectral projectors $P_1$, $P_2$, $Q_1$, $Q_2$ (\cite{Rut}) (see Section~\ref{IndexMeth}) which are used in the development of the numerical methods and in the proof of theorems.    Note the following:  we do not require that the equation~\eqref{DAE} be a regular DAE of index~1 for all  $(t,x)\in [0,\infty )\times \Rn$ (or $(t,x)\in L_0$), i.e., that the pencil $\displaystyle\lambda A+B-\frac{\partial f}{\partial x}(t,x)$  be a regular pencil of index~1 for all $(t,x)$.      This requirement is one of the conditions for the global solvability of a nonlinear DAE in the theorem [\cite{LamourMarzTisch}, Theorem~6.7].   In \cite{AscherPetz}, \cite{BrenanCP}, \cite{KunkelMehrmann}, \cite{HairerLR}, \cite{HairerW}, and \cite{Knorrenschild}, restrictions similar to the above requirement of index~1 for a regular DAE are used locally to prove the local solvability of DAEs. Various notions of an index for a regular DAE and the relationship between them are discussed in [\cite{Fil.MPhAG}, Remark~2.1].
In what follows, for the sake of generality, the equation~\eqref{DAE} with an arbitrary (not necessarily degenerate) linear operator  $A\colon \Rn \to \Rn$  will be called a semilinear DAE.

A function $x(t)$ is called a \emph{solution} of the initial value problem \eqref{DAE}, \eqref{ini} on some interval $[t_0,t_1)$, $t_1\le \infty $, if $x \in C([t_0,t_1),\, \Rn)$, $Ax \in C^1([t_0,t_1),\Rn)$, $x$~satisfies the equation \eqref{DAE} on $[t_0,t_1)$ and the initial condition \eqref{ini}.    A solution $x(t)$ of the initial value problem \eqref{DAE}, \eqref{ini} is called \emph{global} if it exists on the interval $[t_0,\infty )$.

It should be noted that in the case of the degenerate operator $A$ a solution of a semilinear DAE of the form  $\displaystyle A\frac{d}{dt}x+Bx=f(t,x)$  must be smoother (the solution is continuously differentiable) than a solution of a semilinear DAE of the form \eqref{DAE}.

There are various ways of finding approximate solutions of DAEs.
In most works, the main idea is the reduction of a DAE to an ODE or the replacement of a DAE by a stiff ODE for the further application of the known methods for solving ODEs, as well as the use of these methods directly for solving DAEs.
In \cite{GearPetz84}, various algorithms for reducing regular linear DAEs to ODEs have been presented and references to earlier works have been given.  In addition, the BDF (backward differentiation formulas) method for solving a regular nonlinear DAE having uniform index 1 has been proposed (see also \cite{BrenanCP}, \cite{AscherPetz}).  The $\varepsilon$-embedding method, the BDF method and general linear multi-step methods (see \cite{AscherPetz}, \cite{BrenanCP}, \cite{KunkelMehrmann}, \cite{Knorrenschild}, \cite{HairerW}, \cite{HairerLR}) are applied to semi-explicit DAEs
$\dot{y}= f(t,y,z)$,  $0=g(t,y,z)$ of index 1  (this DAE has index 1 for all $t$, $y$, $z$ such that $\left[\frac{\partial g}{\partial z}(t,y,z)\right]^{-1}$ exists and is bounded).  The $\varepsilon$-embedding method is as follows: The corresponding stiff system of ODEs  $\dot{y}= f(t,y,z)$, $\varepsilon  \dot{z}=g(t,y,z)$, $\varepsilon \to 0$, is considered, then Runge-Kutta or other suitable methods are applied to the stiff system and $\varepsilon = 0$ is put in the resulting formulas.  The semi-explicit DAE corresponding to the stiff ODE system is called reduced. Note that the solution of a perturbed (stiff) ODE system of the form $\dot{y}=f(t,y,z,\varepsilon)$, $\varepsilon\dot{z}=g(t,y,z,\varepsilon)$, where $\varepsilon >0$ is a small parameter, in general does not approach the solution of the reduced DAE (obtained by setting $\varepsilon=0$)\; $\dot{y}= f(t,y,z)=f(t,y,z,0)$, $0=g(t,y,z)=g(t,y,z,0)$ (cf. \cite{Knorrenschild}, \cite{BrenanCP}). Under certain conditions, it is possible to construct a stiff ODE system whose solutions converge in some sense towards the solution of the reduced DAE as $\varepsilon\to 0$ (see, e.g., \cite{Knorrenschild}). Also, under certain conditions the solution of the reduced DAE is a good approximation to the solution of the corresponding stiff ODE system. The considered perturbed or stiff ODE system is of interest by themselves because it arise in many applications. Such systems may contain slow and fast variables (motions) and describe, in particular, the motions of rigid bodies about the center of mass under the action of the moments of forces of various physical nature (\cite{CheAkLe}, \cite{BogSob}).     To study these systems, various asymptotic methods,  e.g., the averaging method allowing one to find approximate solutions of the perturbed system on an asymptotically large interval of time (\cite{CheAkLe}; \cite{VolMorg}) and the method of integral manifolds for the asymptotic separation of fast and slow motions (\cite{BogSob}, \cite{KobrSob}), are used.    For a regular nonlinear DAE of index 1 (this condition for the semilinear DAE was discussed above) (\cite{AscherPetz}, \cite{BrenanCP}, \cite{LamourMarzTisch}) and for a regular quasilinear DAE of the form $C(y)\dot{y}=f(y)$ with constraints providing the local solvability (\cite{HairerW}), the application of the BDF methods, the Runge-Kutta methods, the backward Euler method (the implicit Euler method) and the general linear multi-step method has been considered.   In \cite{KunkelMehrmann}, the BDF and linear multi-step methods are used for solving regular linear DAEs with constant matrix coefficients and it is discussed why similar results cannot hold for linear DAE with variable coefficients.   The collocation Runge–Kutta method and the BDF method for solving a regular strangeness-free DAE (with the strangeness index 0) of the form  $F_1(t,x,\dot{x})=0$, $F_2(t,x)=0$ are also proposed in \cite{KunkelMehrmann}.   In \cite{BoyarDanLCh}, an analog of the Euler method is applied to the nonlinear system $f(\dot{x},x,t)=0$  when special conditions are fulfilled and a DAE of the form $A(t)\dot{x}+\Phi(x,t)=0$ is considered as a particular case.  The combination of the simple iteration method and the explicit Euler method is used for solving a degenerate semilinear integro-differential equation in \cite{Piven}.  In \cite{Kylikov}, two combined methods such as the implicit Euler method in combination with the simple iteration method  and the implicit Adams method in combination with the Newton method are proposed for solving the autonomous semi-explicit DAE  $\dot{y}= f(y,z)$, $z= g(y,z)$.

Most of the mentioned methods for solving semilinear and nonlinear DAEs can be correctly applied only on a sufficiently small (local) interval of time and the calculation of the allowable length of this interval is a separate problem.  This is due to the fact that the existence of exact and approximate solutions is proved only on a sufficiently small time interval (\cite{HairerW}, \cite{HairerLR}, \cite{Knorrenschild}, \cite{AscherPetz}, \cite{BrenanCP}, \cite{KunkelMehrmann}, \cite{BoyarDanLCh}, \cite{LamourMarzTisch}).   However, for the analysis of the global dynamics of mathematical models, it is important to be able to investigate the behavior of the solution on an arbitrary (arbitrarily large) time interval.

The conditions, used in \cite{GearPetz84}, \cite{Kylikov}, \cite{LamourMarzTisch}, and \cite{Piven} to prove the existence of exact solutions and the convergence of the numerical methods on an arbitrary time interval, are too restrictive for certain classes of mathematical models. Namely, the global Lipschitz condition and similar conditions (for example, the global condition of contractivity and the condition of global index~1 for a regular DAE) are used. These conditions are not fulfilled for mathematical models of electrical circuits with certain nonlinear parameters (for example, in the form of power functions mentioned in Section~\ref{Appl}) on the time interval $[t_0,T]$, which is not sufficiently small, and, obviously, on the interval $[t_0,\infty )$.  The intervals $[t_0,T]$ and $[t_0,\infty )$ are considered when finding numerical solutions and when studying the existence, uniqueness and Lagrange stability of exact solutions, respectively.   In general, various types of implicit differential equations with nonlinear functions which may not satisfy the global Lipschitz condition and similar conditions arise in many applications (for example, various classes of implicit stochastic differential equations with non-globally Lipschitz functions) (\cite{BenchaabaneSakth}).

For the application of many methods, additional analytical transformations and constructions are required. To use the algorithms and methods from \cite{GearPetz84}, it is necessary that there exist the projection matrices or mappings which transform the DAE to a canonical or semicanonical form.  In some works, conditions for the existence of operators allowing one to reduce a semilinear DAE to a semi-explicit DAE are given, however, their construction requires additional transformations and computations (cf. \cite{BoyarDanLCh}, \cite{LamourMarzTisch}).   It should be noted that one can reduce the semilinear DAE~\eqref{DAE} to the equivalent semi-explicit DAE, using the spectral projectors discussed in Section~\ref{IndexMeth}.  Further, in order to apply the aforementioned $\varepsilon$-embedding method, it is necessary to reduce a semi-explicit DAE to the corresponding stiff system of ODEs that requires a sufficient smoothness of the nonlinear function in the algebraic part of the DAE (the continuity of the function is not enough here).  The methods considered above require that the nonlinear functions appearing in the equation be at least continuously differentiable in all arguments, and most of the methods require the higher smoothness of the nonlinear functions.

In this paper, we obtain numerical methods having the following \emph{advantages} in comparison with other known methods for solving equations of the type~\eqref{DAE}:

1.~Applying the obtained methods, it is possible to compute approximate solutions on any given time interval $[t_0,T]$.   The theorems on the existence and uniqueness of global solutions (see  Section~\ref{GlobSolv}) and on the convergence of the numerical methods  (see  Section~\ref{NumMs}) ensure the correctness and convergence of the methods.  The results of the theoretical research of the global dynamics of mathematical model considered in Section~\ref{Appl} are consistent with the results of the analysis of numerical solutions.

2.~The obtained methods require weaker restrictions for the nonlinear part of the equation.  To prove the existence and uniqueness of global solutions and to prove the convergence of the methods, the restrictions of the type of the global Lipschitz condition are not used (we discussed these conditions above). Moreover, the methods require the less smoothness the nonlinear part of the equation than other known methods.  They are applicable to the DAEs with the continuous nonlinear part which may not be continuously differentiable in $t$ (see Remarks~\ref{remNum-meth},~\ref{remModNum-meth}). This is important for applications, since such equations arise in various practical problems.  In particular, the functions of currents and voltages in electric circuits may not be continuously differentiable (or be piecewise continuously differentiable) or may be approximated by functions that are not continuously differentiable.  As examples, nonsinusoidal currents and voltages of the ``sawtooth'', ``triangular'' and ``rectangular'' shapes (see, e.g., \cite{EricksonMaks}) can be considered.  In Section~\ref{Appl}, the example of the numerical solution for an electrical circuit with the voltage of the triangular shape is given.

3.~The spectral projectors, with the help of which the DAE \eqref{DAE} is reduced to the equivalent system of a purely differential equation and a purely algebraic equation  (to the semi-explicit form), can be constructively determined by the formulas \eqref{Proj.1} (\cite{RutVlas})  and be numerically found using \eqref{ProjRes}.   The possibility to easily compute the projectors on a computer, using \eqref{ProjRes}, enables to numerically solve the DAE directly in the form \eqref{DAE}, i.e., additional analytical transformations are not required for the application of the developed numerical methods.

To construct the numerical methods, the (differential and algebraic) equations of the system equivalent to the DAE~\eqref{DAE} (see item~3 above) are approximated by using Taylor expansions and finite differences.   As a result, the combined numerical methods are obtained.   In method~1 (the method \eqref{met1}--\eqref{met4}), we apply the explicit Euler method to the differential equation, that is, the derivative is approximated by a forward difference.   In method~2 (the method \eqref{Impmet1}--\eqref{Impmet4}), the derivative is approximated by a centered difference, which leads to a certain modification of the Euler method.  Therefore, method~2 will also be called modified method~1.  In the algebraic equation (for both methods), the Taylor expansion of the nonlinear function in one of the components of the phase variable is used, which gives a method similar to the Newton method with respect to this component. This technique of the expansion allows us to weaken the requirements for the nonlinear function and to apply the obtained methods even for the DAE with the nonlinear part continuous in $t$ (taking into account Remarks~\ref{remNum-meth},~\ref{remModNum-meth}).   The consistency condition $(t_0,x_0)\in L_0$ for the initial values $t_0$,~$x_0$ ensures the best choice of the initial value for the method applied to the algebraic equation.  Methods 1 and 2 have the first and second orders of accuracy, respectively.  It is clear that the approximations of a higher order of accuracy are used for method~2 and, accordingly, the higher smoothness of the nonlinear part of the equation is required (see Theorems~\ref{ThNum-meth},~\ref{ThModNum-meth}).

\emph{The paper has the following structure.}   In Section~\ref{IndexMeth}, we consider the restriction on the operator coefficients of the equation \eqref{DAE} (on the operator pencil) and give the corresponding definition of a regular pencil of index not higher than~1; also, we consider the method of spectral projectors for the reduction of the semilinear DAE to an equivalent semi-explicit form.   In Section~\ref{GlobSolv}, the theorems on the existence and uniqueness of global solutions are presented.
In Section~\ref{NumMs}, the two combined numerical methods for solving the semilinear DAE are obtained and their convergence is proved (note that the method from Subsection~\ref{Num-meth} was proposed by the author in \cite{Filnummeth} without the theorem on its convergence), as well as the important remarks on the convergence of the methods, when weakening the smoothness requirements for the nonlinear function, are given.   In Section~\ref{CompareMeth}, the comparative analysis of these methods is carried out and conclusions about the effectiveness of their application in various situations are made. In Section~\ref{Appl}, we provide the examples demonstrating the capabilities of the developed methods and information on applied problems in which the semilinear DAEs arise.

The following notation is used in this paper: $L(X,Y)$ is the space of continuous linear operators from $X$ into $Y$, $L(X,X)=L(X)$; $\left. A\right|_X$ is the restriction of the operator $A$ to $X$; $I$ is an identity operator (matrix); $x^T$ is the transpose of $x$. Sometimes, in the paper, a function $f$ is denoted by the same symbol $f(x)$ as its value at the point $x$ in order to explicitly indicate its argument (or arguments), but it will be clear from the context what exactly is meant.

\section{Index of the regular pencil $\lambda A+B$ and spectral projectors}\label{IndexMeth}

Consider the initial value problem \eqref{DAE}, \eqref{ini}:
\begin{equation*}
 \frac{d}{dt}[Ax]+Bx=f(t,x), \qquad x(t_0)=x_0.
\end{equation*}

It is assumed that the pencil $\lambda A+B$, where $\lambda$ is a complex parameter, is \emph{regular}, i.e.,  there exists a number $\lambda_0$ such that $\det(\lambda_0 A+B)\not = 0$.  Further, we assume that there exist constants $C_1,\, C_2 >0$ such that
\begin{equation}\label{index1}
\left\|(\lambda A+B)^{-1}\right\| \le C_1\quad \text{ for all }\; |\lambda|\ge C_2.
\end{equation}
The condition \eqref{index1} (\cite{RutVlas}) means that either the point $\mu = 0$ is a simple pole of the resolvent $(A+ \mu B)^{-1}$ (this is equivalent to the fact that $\lambda = \infty$ is a removable singular point of the resolvent $(\lambda A+B)^{-1}$), or $\mu = 0$ is a regular point of the pencil $A+ \mu B$ (i.e., there exists a resolvent $(A+\mu B)^{-1}$ at the point $\mu =0$ and, hence, the operator $A$ is nondegenerate).

If $A$ is degenerate and the point $\mu = 0$  is a simple pole of the resolvent $(A+ \mu B)^{-1}$, i.e., \eqref{index1} is fulfilled, then we will say that $\lambda A+ B$ is a regular \emph{pencil of index~1}.  If $A$ is nondegenerate,  i.e., $\mu = 0$ is a regular point of the pencil $A+ \mu B$, then we will say that $\lambda A+ B$ is a regular \emph{pencil of index~0}.  Thus, if $\lambda A+B$~is a regular pencil and \eqref{index1} is fulfilled, then we will say that $\lambda A+B$~is a regular \emph{pencil of index not higher than 1} (i.e., of index 0 or 1).

In the general case, according to [\cite{Vlasenko1}, Section~6.2], the maximum length of the chain of an eigenvector and adjoint vectors of the matrix pencil $A+\mu B$ at the point $\mu = 0$ is called the index of the matrix pencil $\lambda A+B$.   Various  notions of an index of the pencil, an index of a DAE and their relationship with the mentioned notion of the pencil of index 1 are considered in [\cite{Fil.MPhAG},~Remark~2.1].

For the regular pencil $\lambda A+B$ satisfying \eqref{index1} there are the two pairs of mutually complementary spectral projectors [\cite{RutVlas}, Sections~2,~6]
\begin{equation}\label{Proj.1}
 \begin{array}{l}
 \displaystyle P_1 =\frac{1}{2\pi i}\, \oint\limits_{|\lambda |=C_2}(\lambda A+B)^{-1}\, A\, d\lambda,\quad  P_2 =I-P_1, \\
 \displaystyle Q_1 =\frac{1}{2\pi i}\, \oint\limits_{|\lambda |=C_2}A\, (\lambda A+B)^{-1}\,d\lambda,\quad  Q_2 =I-Q_1,
 \end{array}
\end{equation}
which decompose the space $\Rn$ into direct sums of subspaces
\begin{equation}\label{Proj.2}
 \Rn =X_1 \dot{+}X_2,\quad  \Rn=Y_1 \dot{+} Y_2,\quad  X_j =P_j \Rn,\quad Y_j =Q_j \Rn,\quad  j=1,2,
\end{equation}
such that the operators $A$, $B$ map $X_j$ to $Y_j$ and the induced operators ${A_j=\left. A\right|_{X_j}\colon X_j \to Y_j}$, ${B_j=\left. B\right|_{X_j}\colon X_j \to Y_j}$ (${j=1,2}$) are such that $A_2=0$ and  there exist the inverse operators $A_1^{-1} \in L(Y_1,X_1)$, $B_2^{-1} \in L(Y_2,X_2)$.   The projectors are real   (since $A$ and $B$ are real) and have the properties  $AP_1 =Q_1 A=A$, $AP_2 =Q_2 A=0$, $BP_j =Q_j B$, $j=1,2$.

Using the spectral projectors, we can also obtain the auxiliary operator $G\in L(\Rn)$ (\cite{RutVlas})
\begin{equation}\label{Proj.G}
G=A +BP_2 = A+Q_2 B,\quad GX_j =Y_j,\quad j=1,2,
\end{equation}
which has the inverse operator $G^{-1}\in L(\Rn)$ with the properties $G^{-1} AP_1 =P_1$, $G^{-1} BP_2 =P_2$, $AG^{-1} Q_1 =Q_1$, $BG^{-1} Q_2 =Q_2$.

Obviously, the projectors \eqref{Proj.1} can be calculated by using residues:
\begin{equation}\label{ProjRes}
\begin{array}{l}
 \displaystyle P_1 = \mathop{Res }\limits_{\mu =0}\left(\frac{(A+\mu B)^{-1} A}{\mu} \right),\quad  P_2 =I-P_1,  \\
 \displaystyle Q_1 =\mathop{Res }\limits_{\mu =0}\left(\frac{A(A+\mu B)^{-1}}{\mu} \right),\quad  Q_2 =I-Q_1.
 \end{array}
\end{equation}

With respect to the decomposition \eqref{Proj.2} any vector $x\in \Rn$ can be uniquely represented in the form
\begin{equation}\label{xr}
x=x_{p_1} +x_{p_2},\qquad x_{p_1} =P_1 x\in X_1,\quad x_{p_2} =P_2 x\in X_2.
\end{equation}

Applying the spectral projectors $Q_1$, $Q_2$ to the DAE \eqref{DAE} and taking into account their properties, we obtain the equivalent system
\begin{equation}\label{systDAE1}
 \begin{array}{l}
 \displaystyle \frac{d}{dt} [AP_1 x]+BP_1 x = Q_1 f(t,x), \smallskip\\
  BP_2 x = Q_2 f(t,x).
 \end{array}
\end{equation}
In the equations of the system we restrict the operators to the subspaces $X_1$, $X_2$ from \eqref{Proj.2}. Taking into account the invertibility of the induced operators $A_1$, $B_2$ and the representation \eqref{xr}, the system \eqref{systDAE1} can be rewritten in the form
\begin{align*}
 & \frac{d}{dt} x_{p_1}+ A_1^{-1} B_1 x_{p_1} =  A_1^{-1} Q_1 f(t, x_{p_1} +x_{p_2}), \\
 &  x_{p_2} =B_2^{-1} Q_2 f(t, x_{p_1} +x_{p_2}).
\end{align*}
Thus, the spectral projectors allow one to reduce the original semilinear DAE \eqref{DAE} to the equivalent system of purely differential and purely algebraic equations (to the semi-explicit form).

Since the projectors are easily computed with the help of \eqref{ProjRes} on a computer, and, consequently, the operator $G$ \eqref{Proj.G} are easily computed, to construct the numerical methods it is more convenient to use $G^{-1}\in L(\Rn)$. Using $G^{-1}$, we can write the system \eqref{systDAE1}  (equivalent to the DAE \eqref{DAE}) as
\begin{equation}\label{prThreg1}
\begin{array}{l}
 \displaystyle \frac{d}{dt} [P_1 x]+G^{-1} BP_1 x  = G^{-1} Q_1 f(t,P_1x+P_2x), \smallskip\\
 P_2 x  =  G^{-1} Q_2 f(t,P_1x+P_2x).
\end{array}
\end{equation}
Thus, the possibility to easily compute the spectral projectors enables to numerically solve the DAE directly in the form \eqref{DAE}, i.e., to apply the developed numerical methods it is not necessary to carry out additional analytical transformations.

\section{The existence and uniqueness of global solutions}\label{GlobSolv}

Recall that a function $x \in C([t_0,t_1),\, \Rn)$ is a solution of the initial value problem \eqref{DAE}, \eqref{ini} on the interval $[t_0,t_1)$\, ($t_1\le \infty $) if $Ax \in C^1([t_0,t_1),\Rn)$, $x$ ~satisfies the equation \eqref{DAE} on $[t_0,t_1)$ and the initial condition \eqref{ini}.  The solution is global if it exists on the whole interval $[t_0,\infty )$.  To formulate the theorems given below, we need the following definition.
\begin{definition}
(\cite{RF1}, \cite{Fil.MPhAG})\, An operator function (a mapping) $\Phi \colon D\to L(W,Z)$, where $W$,~$Z$ are $s$-dimensional linear spaces and $D\subset W$, is called \textit{basis invertible} on an interval $[\Hat w, \Tilde w]$, where $\Hat w,\, \Tilde w\in D$,  if for some additive resolution of the identity $\{\Theta _k\}_{k=1}^s$~in~the space $Z$  (see [\cite{Fil.MPhAG}, Definition~2.2] or [\cite{RF1}, Definition~2]) and for any set of vectors $\{w^k\}_{k=1}^s \subset [\Hat w, \Tilde w]$  the operator $\Lambda=\sum\limits_{k=1}^s \Theta _k \Phi (w^k) \in L(W,Z)$ has the inverse $\Lambda ^{-1} \in L(Z,W)$.
\end{definition}
In this paper, we use for convenience the term ``interval $[\Hat w, \Tilde w]$'' instead of the term ``convex hull $conv\{\Hat w, \Tilde w\}$ of vectors $\Hat w,\, \Tilde w$'' used in \cite{RF1} and \cite{Fil.MPhAG},  taking into account that $conv\{\Hat w, \Tilde w\}=[\Hat w, \Tilde w]=\{\alpha \Tilde w + (1-\alpha)\Hat w \mid \alpha \in [0,1]\}$.

Note that the property of basis invertibility does not depend on the choice of an additive resolution of the identity in $Z$.
Also note that if the operator function $\Phi $ is basis invertible on $[\Hat w, \Tilde w]$, then it is invertible on $[\Hat w, \Tilde w]$, i.e., for each point $w^*\in [\Hat w, \Tilde w]$  its image $\Phi (w^*)$ under the mapping $\Phi $ is an invertible continuous linear operator from $W$ into $Z$.  The converse statement is not true unless the spaces $W$, $Z$ are one-dimensional (see [\cite{Fil.MPhAG}, Example~2.1]).

Below we use the projectors $P_j$, $Q_j$ and the subspaces $X_j$, $Y_j$, $j=1,2$, discussed in Section~\ref{IndexMeth}.  Recall that $L_0 = \{(t,x) \in [0,\infty) \times \Rn \mid Q_2[Bx-f(t,x)]=0\}$ \eqref{L_0}.
\begin{theorem}\label{ThRF1}
Let $f\in C([0,\infty)\times \Rn, \Rn)$,\, $\displaystyle\frac{\partial f}{\partial x}  \in C([0,\infty)\times \Rn, L(\Rn))$, and $\lambda A+B$ be a regular pencil of index not higher than~1.  Assume that for any $t\ge 0$ and any $x_{p_1}\in X_1$ there exists $x_{p_2} \in X_2$ such that
\begin{equation}\label{soglreg2}
 (t,x_{p_1}+x_{p_2})\in L_0,
\end{equation}
and for any $x^i_{p_2} \in X_2$ such that $(t_*,x_{p_1}^*+x^i_{p_2}) \in L_0$, $i=1,2$, the operator function
\begin{equation}\label{funcPhi}
\Phi \colon X_2 \to L(X_2,Y_2),\quad \Phi (x_{p_2})=\left[\frac{\partial Q_2 f}{\partial x} (t_*,x_{p_1}^* +x_{p_2})-B\right] P_2,
\end{equation}
is basis invertible on $[x^1_{p_2},\,x^2_{p_2}]$. Assume that there exists a self-adjoint positive operator $H\in L(X_1)$ and for each $T>0$ there exists a number $R_T >0$ such that
\begin{equation}\label{ineqreg}
 (H P_1 x,G^{-1}Q_1f(t,x))\le 0\quad \text{ for all }\; (t,x)\in L_0\; \text{ such that }\; 0\le t\le T,\; \|P_1 x\| \ge R_T.
\end{equation}
Then for each initial point $(t_0,x_0)\in L_0$ there exists a unique global solution $x(t)$ of the initial value problem \eqref{DAE}, \eqref{ini}.
\end{theorem}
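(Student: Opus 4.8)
\medskip
\noindent\textbf{Proof plan.}
The plan is to pass to the equivalent semi-explicit system \eqref{prThreg1}, to solve its algebraic equation globally for $x_{p_2}=P_2x$ as a function $\varphi$ of $(t,x_{p_1})$ which is continuous and of class $C^1$ in $x_{p_1}$, to substitute this function into the differential equation so as to reduce the problem to an ordinary differential equation on the finite-dimensional subspace $X_1$, and finally to show that the local solution of that ODE extends to $[t_0,\infty)$ by means of an a~priori bound built from the operator $H$.

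First I would analyze the algebraic equation $x_{p_2}=G^{-1}Q_2f(t,x_{p_1}+x_{p_2})$, which after applying $B_2$ is equivalent to $F(t,x_{p_1},x_{p_2}):=B_2x_{p_2}-Q_2f(t,x_{p_1}+x_{p_2})=0$ in $Y_2$, where $\partial F/\partial x_{p_2}=-\Phi$ with $\Phi$ as in \eqref{funcPhi}. For fixed $t\ge 0$ and $x_{p_1}\in X_1$, hypothesis \eqref{soglreg2} yields at least one solution $x_{p_2}\in X_2$. To obtain uniqueness I would take two solutions $x_{p_2}^1,x_{p_2}^2$ and rewrite $0=F(t,x_{p_1},x_{p_2}^1)-F(t,x_{p_1},x_{p_2}^2)$ by a finite-increment formula applied componentwise with respect to an additive resolution of the identity $\{\Theta_k\}$ in $Y_2$; this puts the difference into the form $\Lambda\,(x_{p_2}^1-x_{p_2}^2)$ with $\Lambda=-\sum_k\Theta_k\Phi(w^k)$ for suitable $w^k\in[x_{p_2}^1,x_{p_2}^2]$, and basis invertibility of $\Phi$ on $[x_{p_2}^1,x_{p_2}^2]$ makes $\Lambda$ invertible, whence $x_{p_2}^1=x_{p_2}^2$. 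This defines the map $x_{p_2}=\varphi(t,x_{p_1})$ characterized by $(t,x_{p_1}+\varphi(t,x_{p_1}))\in L_0$; since basis invertibility forces $\Phi$ to be invertible at every point, the classical implicit function theorem (here the continuity of $f$ and of $\partial f/\partial x$ is used) shows that $\varphi$ is continuous and continuously differentiable in $x_{p_1}$.

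Substituting $x_{p_2}=\varphi(t,x_{p_1})$ into the differential equation of \eqref{prThreg1} yields the ODE $\dot x_{p_1}=\Psi(t,x_{p_1})$, where $\Psi(t,x_{p_1}):=-G^{-1}Bx_{p_1}+G^{-1}Q_1f(t,x_{p_1}+\varphi(t,x_{p_1}))$ is continuous in $t$ and continuously differentiable in $x_{p_1}$, while $(t_0,x_0)\in L_0$ is exactly the relation $P_2x_0=\varphi(t_0,P_1x_0)$. By the standard existence and uniqueness theorem for ODEs there is a unique solution $x_{p_1}(t)$ with $x_{p_1}(t_0)=P_1x_0$ on a maximal interval $[t_0,\beta)$, and $x(t):=x_{p_1}(t)+\varphi(t,x_{p_1}(t))$ is the unique solution of \eqref{DAE}, \eqref{ini} on $[t_0,\beta)$, with $x\in C([t_0,\beta),\Rn)$ and $Ax=A_1x_{p_1}\in C^1([t_0,\beta),\Rn)$. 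To show $\beta=\infty$, I would suppose $\beta<\infty$, fix a finite $T\ge\beta$, and put $V(t):=(HP_1x(t),P_1x(t))$; then
\[
\dot V = -2\bigl(Hx_{p_1},G^{-1}Bx_{p_1}\bigr) + 2\bigl(Hx_{p_1},G^{-1}Q_1f(t,x)\bigr).
\]
On the part of $[t_0,\beta)$ where $\|P_1x(t)\|\ge R_T$ the last term is $\le 0$ by \eqref{ineqreg}, and the first is $\le a\,V$ with $a=2\|H\|\,\|G^{-1}B\|/\lambda_{\min}(H)$; on the complementary part $x(t)$ stays in the compact image of $[0,T]\times\{x_{p_1}\in X_1:\|x_{p_1}\|\le R_T\}$ under $(t,x_{p_1})\mapsto x_{p_1}+\varphi(t,x_{p_1})$, on which $f$ is bounded, so $\dot V\le b$ there for some $b=b(T)$. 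Hence $\dot V\le a\,V+b$ on $[t_0,\beta)$, Gronwall's inequality gives $V(t)\le(V(t_0)+b/a)e^{a(t-t_0)}$, so $x(t)=P_1x(t)+\varphi(t,P_1x(t))$ remains bounded on $[t_0,\beta)$, contradicting the maximality of $[t_0,\beta)$; therefore $\beta=\infty$, and the uniqueness on $[t_0,\infty)$ follows from the uniqueness at each step.

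The step I expect to be the main obstacle is the global solvability together with the $C^1$-regularity in $x_{p_1}$ of the algebraic equation --- in essence a global implicit function theorem. A plain mean value argument for the vector-valued map $F(t,x_{p_1},\cdot)$ is not available, because an average of invertible Jacobians need not be invertible, and it is exactly the hypothesis of basis invertibility --- which quantifies over all choices of the points $w^k$ relative to an additive resolution of the identity --- that restores injectivity on arbitrarily long segments; this is the content of the results of \cite{RF1, Fil.MPhAG} that I would invoke.
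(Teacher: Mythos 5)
Your proposal is correct and follows essentially the same route as the paper's proof, which is delegated to Theorem~1 of \cite{RF1}: reduction to the semi-explicit system via the spectral projectors, global solvability and uniqueness of the algebraic part obtained from the basis-invertibility hypothesis (componentwise finite-increment argument) together with the implicit function theorem at points of $L_0$, and a Lyapunov-type estimate for $V=(HP_1x,P_1x)$ using \eqref{ineqreg} and a Gronwall argument to rule out finite escape time of the reduced ODE on $X_1$. I see no genuine gap in your argument.
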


\proof{Proof}   The proof of the theorem is carried out in the same way as the proof of Theorem~1 from \cite{RF1}.  If the DAE \eqref{DAE} has the regular pencil of index~0 ($A$ is nondegenerate), then it can be reduced to an ordinary differential equation. In this case, $Q_2=P_2=0$, $Q_1=P_1 = I$ and  \eqref{soglreg2}, \eqref{funcPhi} are absent (\cite{Fil}).
\endproof
\begin{corrolary}\label{conseq-ThRF1}
Assume that in Theorem~\textup{\ref{ThRF1}} the projection $Q_1 f$ admits the representation
\begin{equation}\label{consRF1-1}
Q_1 f(t,x)=S_1(t)P_1 x+\psi (t,x) + \Pi (x)e(t),
\end{equation}
where $S_1\in C([0,\infty),L(X_1, Y_1))$, $\psi \in C([0,\infty)\times \Rn, Y_1)$, $\displaystyle\frac{\partial \psi }{\partial x} \in C([0,\infty)\times \Rn, L(\Rn ,Y_1))$, $e\in C([0,\infty),\Rn)$, $\Pi\in C^1 (\Rn ,L(\Rn ,Y_1))$ and there exist numbers $C,\, r >0$ such that $\|\Pi(x)\|\le C$ for all $\|P_1 x\|\ge r$.  Then Theorem~\textup{\ref{ThRF1}} remains valid if instead of \eqref{ineqreg} the following condition is satisfied:
\begin{equation}\label{consRF1-3}
(HP_1 x,G^{-1} \psi (t,x))\le 0\quad  \text{ for all }(t,x) \in L_0\, \text{ such that }\, 0\le t\le T,\; \| P_1 x\| \ge R_T.
\end{equation}
\end{corrolary}

\proof{Proof}
The proof of the corollary is similar to the proof of Theorem~1 from \cite{RF1}.
\endproof
\begin{definition}(\cite{Filnummeth}, \cite{Fil.MPhAG})\,
A solution $x(t)$ of the initial value problem \eqref{DAE}, \eqref{ini} is called \emph{Lagrange stable} if it is global and bounded, i.e., the solution  $x(t)$ exists on $[t_0,\infty )$ and $\mathop{\sup }\limits_{t\in [t_0,\infty )} \| x(t)\|<+\infty$.\;  \emph{The equation \eqref{DAE} is Lagrange stable} if every solution of the initial value problem \eqref{DAE}, \eqref{ini} is Lagrange stable.
\end{definition}

The theorem on the Lagrange stability of the semilinear DAE~\eqref{DAE} (\cite{Fil.MPhAG}) is given below.
\begin{theorem}\label{Th_Ust1}
\hspace{0pt}

\textup{I.} Let $f\in C([0,\infty)\times \Rn, \Rn)$,\, $\displaystyle\frac{\partial f}{\partial x}  \in C([0,\infty)\times \Rn, L(\Rn))$,\, $\lambda A+B$  be a regular pencil of index not higher than~1 and \eqref{soglreg2} be fulfilled.  Let for any $x^i_{p_2} \in X_2$ such that $(t_*, x_{p_1}^* + x^i_{p_2})\in L_0$, $i=1,2$, the operator function \eqref{funcPhi} be basis invertible on $[x^1_{p_2},\,x^2_{p_2}]$.  Assume that for some self-adjoint positive operator $H \in L(X_1)$ and some number $R>0$ there exist functions $k\in C([0,\infty),{\mathbb R})$, $U\in C((0,\infty), (0,\infty))$  such that $\displaystyle\int\limits_c^{+\infty} \frac{dv}{U(v)} =+\infty $ $(c>0)$ and
\begin{equation}\label{Lagr1}
\hspace*{-0.9cm} \big(HP_1 x,G^{-1}[-BP_1 x\! +\! Q_1 f(t,x)]\big)\!\le\! k(t)\, U\big({\textstyle \frac{1}{2}}(HP_1x,P_1x)\big) \text{ for all } (t,x)\!\in\! L_0 \text{ such that } \|P_1 x\|\!\ge\! R.
\end{equation}
Then for each initial point  $(t_0,x_0)\in L_0$ there exists a unique global solution  $x(t)$ of the initial value problem \eqref{DAE}, \eqref{ini}.

\textup{II.} If, additionally, $\displaystyle\int\limits_{t_0}^{+\infty} k(t)\, dt <+\infty$,
and there exists $\Tilde{x}_{p_2} \in X_2$ such that for any $x_{p_2}^*\in X_2$ satisfying $(t_*, x_{p_1}^* + x_{p_2}^*)\in L_0$  the operator function \eqref{funcPhi} is basis invertible on $(\Tilde x_{p_2},x_{p_2}^*]$ and the corresponding inverse operator is bounded uniformly in $x_{p_2}$, $t_*$,  and
\begin{equation}\label{LagrA1}
\mathop{\sup }\limits_{t\in [0,\infty ),\: \| x_{p_1}\|\le M} \|Q_2 f(t,x_{p_1}+ \tilde{x}_{p_2})\| < +\infty,\quad M>0\; (M\in {\mathbb R}),
\end{equation}
then for the initial points $(t_0,x_0)\in L_0$ the equation \eqref{DAE} is Lagrange stable.
\end{theorem}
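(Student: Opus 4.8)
The plan is to follow the scheme of Theorem~\ref{ThRF1} and of \cite{RF1, Fil.MPhAG}: first I would use the spectral projectors to replace the DAE by a reduced ordinary differential equation on $X_1$, eliminating the algebraic component, and then run a Lyapunov--comparison argument built on $U$. Concretely, \eqref{DAE} is rewritten as the system \eqref{prThreg1}; its second (algebraic) equation $P_2x=G^{-1}Q_2f(t,P_1x+P_2x)$ is equivalent to $Q_2\bigl[B(x_{p_1}+x_{p_2})-f(t,x_{p_1}+x_{p_2})\bigr]=0$, and the derivative of its left-hand side in $x_{p_2}\in X_2$ is, up to sign, the operator function \eqref{funcPhi}. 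Because \eqref{funcPhi} is basis invertible on the relevant segments and, by \eqref{soglreg2}, the algebraic equation is solvable for every $(t,x_{p_1})$, a global implicit-function/continuation argument (as in \cite{RF1, Fil.MPhAG}) would produce a unique $x_{p_2}=\omega(t,x_{p_1})$ with $\bigl(t,\,x_{p_1}+\omega(t,x_{p_1})\bigr)\in L_0$, with $\omega$ inheriting the regularity of $f$ (continuous, locally Lipschitz in $x_{p_1}$). Substituting into the first equation of \eqref{prThreg1} gives the reduced equation $\dot x_{p_1}=-G^{-1}BP_1x_{p_1}+G^{-1}Q_1f\bigl(t,x_{p_1}+\omega(t,x_{p_1})\bigr)$ on $X_1$, with continuous right-hand side locally Lipschitz in the state; local solutions then exist and are unique, and solutions of \eqref{DAE}, \eqref{ini} with $(t_0,x_0)\in L_0$ correspond to solutions of the reduced equation with $x_{p_1}(t_0)=P_1x_0$.

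\textbf{Part~I.} I would use the Lyapunov function $V(x_{p_1})=\tfrac12(Hx_{p_1},x_{p_1})=\tfrac12(HP_1x,P_1x)$. Along a solution one has $(t,x(t))\in L_0$ and $\dot V=\bigl(HP_1x,\,G^{-1}[-BP_1x+Q_1f(t,x)]\bigr)$, so by \eqref{Lagr1}, $\dot V\le k(t)\,U(V)$ whenever $\|P_1x\|\ge R$, while $V\le\tfrac12\|H\|R^{2}$ on the complementary set. Dividing by $U(V)>0$ on the intervals where $V$ is large and integrating yields $\int_{V(t_0)}^{V(t)}\frac{dv}{U(v)}\le\int_{t_0}^{t}k(s)\,ds$, finite for finite $t$; since $\int_c^{+\infty}\frac{dv}{U(v)}=+\infty$, the function $V$, hence $\|P_1x(t)\|$, cannot escape to infinity in finite time (a Wintner-type bound). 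As $\omega$ is continuous, $\|P_2x(t)\|=\|\omega(t,x_{p_1}(t))\|$ stays bounded on bounded $t$-intervals once $x_{p_1}$ does, so $x(t)=x_{p_1}(t)+\omega(t,x_{p_1}(t))$ has no finite escape time; the maximal solution therefore continues to $[t_0,\infty)$, with uniqueness inherited from the reduced equation.

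\textbf{Part~II.} Under the extra assumption $\int_{t_0}^{+\infty}k(t)\,dt<+\infty$, the right-hand side of $\int_{V(t_0)}^{V(t)}\frac{dv}{U(v)}\le\int_{t_0}^{t}k(s)\,ds$ is bounded by $\int_{t_0}^{+\infty}k$, so the divergence of $\int^{+\infty}\frac{dv}{U(v)}$ forces $\sup_{t\ge t_0}V(x_{p_1}(t))<+\infty$, i.e. $M:=\sup_{t\ge t_0}\|P_1x(t)\|<+\infty$. It remains to bound $P_2x(t)=\omega(t,x_{p_1}(t))$ uniformly in $t$. With $\tilde x_{p_2}$ as in the statement, I would set $h_t(x_{p_2})=Q_2\bigl[B(x_{p_1}(t)+x_{p_2})-f(t,x_{p_1}(t)+x_{p_2})\bigr]$, so $h_t\bigl(\omega(t,x_{p_1}(t))\bigr)=0$ and the derivative of $h_t$ equals $-\Phi(\cdot)$ with $\Phi$ from \eqref{funcPhi} taken at $x_{p_1}^{*}=x_{p_1}(t)$. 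Applying the componentwise mean value theorem along the segment from $\tilde x_{p_2}$ to $\omega(t,x_{p_1}(t))$ and assembling the mean points through an additive resolution of the identity in $Y_2$ gives $h_t\bigl(\omega(t,x_{p_1}(t))\bigr)-h_t(\tilde x_{p_2})=-\Lambda_t\bigl(\omega(t,x_{p_1}(t))-\tilde x_{p_2}\bigr)$ for some $\Lambda_t=\sum_{k}\Theta_k\Phi(w^{k}_t)$ with $w^{k}_t\in(\tilde x_{p_2},\,\omega(t,x_{p_1}(t))]$; by the basis invertibility of \eqref{funcPhi} on $(\tilde x_{p_2},x_{p_2}^{*}]$ each $\Lambda_t$ is invertible, whence
\[
\omega(t,x_{p_1}(t))-\tilde x_{p_2}=\Lambda_t^{-1}\bigl(Q_2f(t,x_{p_1}(t)+\tilde x_{p_2})-B_2\tilde x_{p_2}\bigr).
\]
By \eqref{LagrA1} (with this $M$) the vector in parentheses is bounded uniformly in $t\ge t_0$, so once $\sup_{t\ge t_0}\|\Lambda_t^{-1}\|<+\infty$ is secured, $\sup_{t\ge t_0}\|P_2x(t)\|<+\infty$; together with Part~I, $\sup_{t\ge t_0}\|x(t)\|<+\infty$, i.e. the solution through $(t_0,x_0)\in L_0$ is Lagrange stable, and since $(t_0,x_0)\in L_0$ was arbitrary, so is \eqref{DAE}.

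\textbf{Main obstacle.} The hard part is this last step, $\sup_{t\ge t_0}\|\Lambda_t^{-1}\|<+\infty$: a priori the segments $(\tilde x_{p_2},\,\omega(t,x_{p_1}(t))]$ need not be uniformly bounded, so pointwise invertibility of \eqref{funcPhi} alone would not be enough. This is exactly why basis invertibility (of \eqref{funcPhi} and, in Part~II, on the half-open segments $(\tilde x_{p_2},x_{p_2}^{*}]$) is imposed in the hypotheses: the properties of basis-invertible operator functions established in \cite{RF1, Fil.MPhAG} would supply the needed uniform control on the inverses of all operators $\Lambda_t$ assembled from $\Phi$ over these segments, closing the estimate. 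The remaining ingredients --- the projector reduction, local existence and uniqueness for the reduced ODE, and the comparison bound --- are routine once that machinery is in place.
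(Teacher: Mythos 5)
Your overall route—reduction via the spectral projectors to the semi-explicit system \eqref{prThreg1}, solving the algebraic part through the basis-invertibility machinery to obtain $x_{p_2}=\omega(t,x_{p_1})$ with $(t,x_{p_1}+\omega(t,x_{p_1}))\in L_0$, a Wintner-type comparison bound for $V=\tfrac12(HP_1x,P_1x)$ using \eqref{Lagr1} for Part~I, and a mean-value decomposition anchored at $\tilde x_{p_2}$ together with \eqref{LagrA1} for Part~II—is exactly the strategy of the proof the paper relies on: the paper itself offers no argument beyond the statement that the proof is carried out as in Theorem~3.1 of \cite{Fil.MPhAG}, and that proof proceeds along precisely these lines. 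Part~I of your sketch is essentially complete modulo the standard continuation argument and the (correct) use of basis invertibility of \eqref{funcPhi} on $[x^1_{p_2},x^2_{p_2}]$ to get existence, uniqueness and regularity of $\omega$.

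The weak point is the one you flag yourself, and it is a genuine gap rather than a routine deferral. In Part~II you arrive at $\Lambda_t\bigl(\omega(t,x_{p_1}(t))-\tilde x_{p_2}\bigr)=-\bigl(B\tilde x_{p_2}-Q_2 f(t,x_{p_1}(t)+\tilde x_{p_2})\bigr)$, where \eqref{LagrA1} and the bound on $\|P_1x(t)\|$ control the right-hand side uniformly in $t$; but the conclusion $\sup_{t\ge t_0}\|P_2x(t)\|<+\infty$ still requires $\sup_{t\ge t_0}\|\Lambda_t^{-1}\|<+\infty$, and the operators $\Lambda_t$ are assembled from points of the segments $(\tilde x_{p_2},\,\omega(t,x_{p_1}(t))]$, whose boundedness is precisely what is being proved—the circularity is real. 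Basis invertibility of \eqref{funcPhi} on each $(\tilde x_{p_2},x^*_{p_2}]$ gives invertibility of each individual $\Lambda_t$, but by itself it does not give a bound on the inverses that is uniform over a family of segments of a priori unbounded length; extracting such a uniform estimate (equivalently, a bound on $\|\omega(t,x_{p_1}(t))-\tilde x_{p_2}\|$ independent of $t$) is the actual content of the lemmas on basis-invertible operator functions in \cite{RF1,Fil.MPhAG}, and your appeal to unspecified ``properties established'' there leaves the decisive step unproved. As written, the proposal reduces Part~II to this unproved uniform-inverse claim; to close it you would need to state and use the relevant quantitative consequence of basis invertibility (or reproduce the contradiction argument of Theorem~3.1 of \cite{Fil.MPhAG}) rather than cite it generically.
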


\proof{Proof}
The proof is carried out in the same way as the proof of Theorem~3.1 from \cite{Fil.MPhAG}.
\endproof
\begin{remark}\label{RemDiff}
The solution $x(t)$ of the initial value problem \eqref{DAE}, \eqref{ini} is such that $P_1 x(t)\in C^1([t_0,\infty), X_1)$ and $P_2 x(t)\in  C([t_0,\infty),X_2)$.

If in Theorems~\ref{ThRF1},~\ref{Th_Ust1} $f\in C^m([0,\infty)\times \Rn, \Rn)$, $m\in {\mathbb N}$, then the solution $x(t)$  is such that $P_1 x(t)\in C^{m+1}([t_0,\infty), X_1)$ and $P_2 x(t)\in  C^m([t_0,\infty),X_2)$.
\end{remark}

The theorem on the Lagrange instability of the semilinear DAE~\eqref{DAE}, which gives conditions for the existence and uniqueness of solutions with a finite escape time (the solutions are blow-up in finite time) [\cite{Fil.MPhAG}, Definition~2.4] is also proved in [\cite{Fil.MPhAG}, Theorem~4.1]. Thus, this theorem gives the conditions under which the initial value problem \eqref{DAE}, \eqref{ini} does not have global solutions.

\section{The combined methods and their convergence}\label{NumMs}

We denote by $z=x_{p_1}$, $u=x_{p_2}$  the components of a vector $x=z+u\in \Rn$ \eqref{xr} ($z$, $u$ are the projections of the vector~$x$ onto subspaces $X_1$, $X_2$ from \eqref{Proj.2}).  We seek a solution of the initial value problem \eqref{DAE}, \eqref{ini} on an interval $[t_0,T]$. Introduce the uniform mesh $\omega_h=\{t_i=t_0+ih,\; i=0,...,N\}$ with the step size $h=(T-t_0)/N$.  The values of an approximate solution of the problem \eqref{DAE}, \eqref{ini} at the points $t_i$ are denoted by $x_i = z_i+u_i$, $i=0,...,N$ ($z_i=P_1 x_i$, $u_i=P_2 x_i$).  Initial values $z_0$, $u_0$ are chosen so that the consistency condition $B u_0 = Q_2 f(t_0,z_0+u_0)$, equivalent to the condition $(t_0,x_0)\in L_0$ of Theorems~\ref{ThRF1},~\ref{Th_Ust1}, is satisfied.

Recall that the projectors $P_1$, $P_2$, $Q_1$, $Q_2$ and the operator $G$ are computed by the formulas \eqref{ProjRes} and \eqref{Proj.G}.

\subsection{Method 1}\label{Num-meth}

\begin{theorem}\label{ThNum-meth}
Let the conditions of Theorem~\textup{\ref{ThRF1}} \textup{(}or Corollary~\textup{\ref{conseq-ThRF1}}\textup{)} or the conditions of part~\textup{I} of Theorem~\textup{\ref{Th_Ust1}} be satisfied.   Let, additionally, $f\in C^1([t_0,T]\times \Rn, \Rn)$, and the operator function $\Phi $ \eqref{funcPhi}\, \textup{(}where $t_*=t$, $x_{p_1}^*=z$, $x_{p_2}=u$\textup{)}  be invertible for any points $(t,z+u)\in [t_0,T]\times \Rn$.   Then the method
\begin{align}
 &x_0 = z_0 +u_0, \label{met1}\\
 &z_{i+1}=(I-h G^{-1} B) z_i+h G^{-1} Q_1 f(t_i,z_i+u_i), \label{met2}\\
 &u_{i+1}=u_i-\bigg[I - G^{-1}{\displaystyle \frac{\partial Q_2 f}{\partial x}} (t_{i+1},z_{i+1}+u_i)\bigg]^{-1}\! \big[u_i- G^{-1}Q_2f(t_{i+1},z_{i+1}+u_i)\big], \label{met3} \\
 &x_{i+1}=z_{i+1}+u_{i+1},\quad i=0,...,N-1, \label{met4}
\end{align}
approximating the initial value problem \eqref{DAE}, \eqref{ini} on $[t_0,T]$, converges and has the first order of accuracy \textup{(}$\mathop{\max }\limits_{0\le i\le N} \|z(t_i)-z_i\|=O(h)$, $\mathop{\max }\limits_{0\le i\le N} \|u(t_i)-u_i\|=O(h)$\textup{)}. \end{theorem}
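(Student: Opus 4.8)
The plan is to run the convergence analysis on the semi-explicit reformulation \eqref{prThreg1} of the DAE. Writing $z=P_1x\in X_1$, $u=P_2x\in X_2$, that system is the differential equation $\dot z=-G^{-1}Bz+G^{-1}Q_1f(t,z+u)$ together with the constraint $u=G^{-1}Q_2f(t,z+u)$, and \eqref{met2}--\eqref{met3} is exactly one explicit Euler step for the differential part followed by one Newton step in the unknown $u$ for the algebraic part with $z$ frozen at $z_{i+1}$. By the imposed hypotheses the exact problem has a unique solution $x(t)$ on $[t_0,T]$; since $f\in C^1$, Remark~\ref{RemDiff} gives $z(t)\in C^2([t_0,T],X_1)$ and $u(t)\in C^1([t_0,T],X_2)$, so along the solution the right-hand side of the differential part is $C^1$ in $t$ and the Euler step has local truncation error $O(h^2)$. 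I would fix once and for all a compact tube $\mathcal K$ — a closed neighbourhood of the graph $\{(t,x(t)):t\in[t_0,T]\}$ in $[t_0,T]\times\Rn$ — on which $f$ and $\partial f/\partial x$ are bounded and uniformly continuous. Since $A_2=0$ we have $G|_{X_2}=Q_2B|_{X_2}=B_2$, so the operator in the brackets of \eqref{met3}, restricted to $X_2$, coincides (up to $G^{-1}|_{Y_2}=B_2^{-1}$) with $-B_2^{-1}\Phi(u)$; hence the invertibility hypothesis on $\Phi$, together with continuity of $\Phi^{-1}$, makes this bracketed operator invertible with a bound uniform over $\mathcal K$, and the implicit function theorem supplies, for each $(t,z)$ near the solution, a $C^1$ local solution map $u=\tilde u(t,z)\in X_2$ of $u=G^{-1}Q_2f(t,z+u)$, Lipschitz with a constant uniform on $\mathcal K$. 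Note that $u(t_i)=\tilde u(t_i,z(t_i))$ and that $\tilde u(t_{i+1},z_{i+1})$ is precisely the root at which the Newton step \eqref{met3} aims.

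Next I would carry out a single induction on $i$ for the quantities $e_i^z:=\|z(t_i)-z_i\|$, the algebraic residual $r_i:=\|u_i-\tilde u(t_i,z_i)\|$, and the membership of $(t_i,x_i)$ and of the intermediate point $(t_{i+1},z_{i+1}+u_i)$ in $\mathcal K$; the base case is trivial because $z_0=z(t_0)$, $u_0=u(t_0)$, so $e_0^z=0$, $r_0=0$. From $r_i$ and the Lipschitz dependence of $\tilde u$ on $z$ one gets $\|u(t_i)-u_i\|\le L\,e_i^z+r_i$. Subtracting the exact second-order Taylor expansion of $z(t_{i+1})$ from \eqref{met2} and using Lipschitz continuity in $(z,u)$ of the right-hand side of the differential part over $\mathcal K$ gives the standard one-step estimate $e_{i+1}^z\le(1+c_1h)e_i^z+c_1h\,\|u(t_i)-u_i\|+c_2h^2\le(1+c_1'h)e_i^z+c_1'h\,r_i+c_3h^2$, whence the discrete Gronwall inequality yields $e_i^z=O\big(h+\max_{j<i}r_j\big)$.

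It remains to control $r_i$. Since $u_i$ is the Newton step (from $u_{i-1}$) toward $\tilde u(t_i,z_i)$, and $\|u_{i-1}-\tilde u(t_i,z_i)\|\le r_{i-1}+\|\tilde u(t_{i-1},z_{i-1})-\tilde u(t_i,z_i)\|\le r_{i-1}+L\big(h+\|z_i-z_{i-1}\|\big)=r_{i-1}+O(h)$, using $\|z_i-z_{i-1}\|=h\,\|{-}G^{-1}Bz_{i-1}+G^{-1}Q_1f(t_{i-1},z_{i-1}+u_{i-1})\|=O(h)$ on $\mathcal K$, the a priori estimate for one Newton step with a continuous, invertible derivative gives $r_i\le M\,\omega\big(r_{i-1}+O(h)\big)\big(r_{i-1}+O(h)\big)$, where $M$ bounds the inverse in \eqref{met3} over $\mathcal K$ and $\omega$ is a modulus of continuity of $\partial f/\partial x$ on $\mathcal K$. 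An elementary induction then shows $r_i=O(h)$ uniformly in $i$ — if $r_{i-1}\le C_rh$ then $r_i\le M\,\omega\big((C_r+c)h\big)(C_r+c)h\le C_rh$ once $h$ is small enough that $M\,\omega\big((C_r+c)h\big)\le C_r/(C_r+c)$ — and in fact $r_i=o(h)$ for $i\ge1$. Feeding $r_i=O(h)$ back into the Gronwall bound gives $e_i^z=O(h)$, hence $\|u(t_i)-u_i\|\le L\,e_i^z+r_i=O(h)$; the $O(h)$ errors keep the iterates inside $\mathcal K$ for $h$ small, which legitimises the bootstrap, and taking maxima yields $\max_{0\le i\le N}\|z(t_i)-z_i\|=O(h)$ and $\max_{0\le i\le N}\|u(t_i)-u_i\|=O(h)$. (If $A$ is nondegenerate, then $P_2=Q_2=0$, the algebraic part and \eqref{met3} disappear, and the scheme is the classical explicit Euler method.)

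I expect the main obstacle to be the intertwining of the Euler error accumulation with the single Newton step: one must run $e_i^z$, the frozen-$z$ algebraic residual $r_i$, and the tube membership in one joint induction, and — since $f$ is only $C^1$, so $\partial f/\partial x$ is merely uniformly continuous on $\mathcal K$ rather than Lipschitz — argue that a single Newton iteration suffices to send an $O(h)$ residual back to $o(h)$ (it would be $O(h^2)$ under a local Lipschitz assumption on $\partial f/\partial x$), which, although weaker than quadratic convergence, is still enough to preserve first-order accuracy through the Gronwall step.
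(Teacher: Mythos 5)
Your proposal is correct, and its skeleton matches the paper's: both work with the semi-explicit system \eqref{prThreg1}, read \eqref{met2} as an explicit Euler step and \eqref{met3} as one Newton-type step for the algebraic constraint, and both obtain invertibility of the bracketed operator in \eqref{met3} from the identity $\bigl[I-G^{-1}\frac{\partial Q_2 f}{\partial x}\bigr]\big|_{X_2}=-G^{-1}\Phi(\cdot)$ together with the hypothesis on $\Phi$ and Remark~\ref{RemDiff} for the smoothness $z\in C^2$, $u\in C^1$. Where you genuinely diverge is in the error analysis. The paper Taylor-expands the exact solution into the same algebraic form as the scheme (\eqref{exact2}, \eqref{exact3} with residuals $O(h^2)$ and $O(h)$), subtracts, and runs coupled recursive inequalities \eqref{num6}, \eqref{num5} for $\varepsilon^z_i,\varepsilon^u_i$ with constants $M_1$, $C_1$--$C_3$ defined as suprema along the exact and numerical trajectories, closing the argument by summation and induction. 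You instead introduce the implicit solution map $\tilde u(t,z)$ via the implicit function theorem, split $\|u(t_i)-u_i\|\le L\,\varepsilon^z_i+r_i$ into a Lipschitz-propagated part and a frozen-$z$ Newton residual, bound $r_i$ by the one-step Newton contraction $r_i\le M\,\omega(\cdot)(\cdot)$ with a modulus of continuity of $\partial f/\partial x$, and then apply discrete Gronwall to the Euler part, all uniformly on a compact tube with an explicit bootstrap keeping the iterates inside it. Your route buys two things the paper leaves implicit: a clean justification that the suprema-type constants are finite and uniform (the paper's $C_2$, $C_3$ are suprema over the numerical iterates themselves, which presupposes the boundedness your tube argument supplies), and an explicit exhibition of the cancellation by which a single Newton iteration returns an $O(h)$ residual to $o(h)$ when $\partial f/\partial x$ is merely continuous — the mechanism hidden in the paper's passage from the identity after \eqref{C3} to \eqref{num5}. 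The paper's route is shorter and stays entirely within the scheme's algebra, and it is the computation that generalizes directly to the second-order analysis of Theorem~\ref{ThModNum-meth} and to Remarks~\ref{remNum-meth},~\ref{remModNum-meth}; your decomposition would need the analogous $o(1)$/centered-difference variants to cover those cases, but as a proof of Theorem~\ref{ThNum-meth} itself it is sound.
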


\proof{Proof}
By virtue of the theorem conditions, there exists a unique global (exact) solution $x(t)$ of the initial value problem \eqref{DAE}, \eqref{ini} such that $z(t)=P_1 x(t)\in C^2([t_0,T],X_1)$ and $u(t)=P_2 x(t)\in C^1([t_0,T],X_2)$.

The equation \eqref{DAE} is equivalent to the system \eqref{prThreg1} which, with the new notations, is written in the form
\begin{align}
& \frac{dz}{dt} +G^{-1} Bz =G^{-1} Q_1 f(t,z+u), \label{num1}\\
& u =G^{-1} Q_2 f(t,z+u). \label{num2}
\end{align}
Using the Taylor formula, we obtain
\begin{align}
 &\frac{dz}{dt}(t)=\frac{z(t+h)-z(t)}{h}+O(h), \label{Taylor1}\\
 &\begin{array}{r}
 \displaystyle  G^{-1} Q_2 f(t+h,z(t+h)+u(t+h))=G^{-1} Q_2 f(t+h,z(t+h)+u(t)) + \smallskip\\
 \displaystyle  +G^{-1} \frac{\partial Q_2 f}{\partial x}(t+h,z(t+h)+u(t)) [u(t+h)-u(t)]+O(h). \end{array}\label{Taylor2}
\end{align}

From the theorem conditions and
\begin{equation*}
 -G^{-1}\bigg[\frac{\partial Q_2 f}{\partial x}(t,z+u)-B\bigg]P_2 = \bigg[I - G^{-1} \frac{\partial Q_2 f}{\partial x} (t,z+u)\bigg]P_2=\bigg[I - G^{-1} \frac{\partial Q_2 f}{\partial x} (t,z+u)\bigg]\bigg|_{X_2}
\end{equation*}
it follows that there exists the inverse operator $\bigg[I - G^{-1} \displaystyle\frac{\partial Q_2 f}{\partial x} (t,z+u)\bigg]^{-1}\in L(X_2)$ for any points ${(t,z+u)}$ from $L_0$ and $[t_0,T]\times \Rn$. Using \eqref{Taylor1}, \eqref{Taylor2} and taking into account the equivalence of the equation  \eqref{DAE} and the system \eqref{num1}, \eqref{num2}, we can write the problem \eqref{DAE}, \eqref{ini} at the points of the introduced mesh $\omega_h$ as
\begin{align}
 & x(t_0) = z(t_0)+u(t_0),\quad z(t_0) = z_0,\; u(t_0)=u_0, \label{exact1}\\
 & z(t_{i+1})=(I-h G^{-1} B) z(t_i)+h G^{-1} Q_1 f(t_i,z(t_i)+u(t_i))+O(h^2), \label{exact2}\\
 & \begin{array}{r}
 \displaystyle  u(t_{i+1}) = u(t_i)- \bigg[I- G^{-1} \frac{\partial Q_2 f}{\partial x}(t_{i+1},z(t_{i+1})+u(t_i))\bigg]^{-1}\! \big[u(t_i)- \smallskip\\
 \displaystyle  -G^{-1}Q_2 f(t_{i+1},z(t_{i+1})+u(t_i))\big]+ O(h), \end{array} \label{exact3} \\
 & x(t_{i+1})=z(t_{i+1})+u(t_{i+1}),\quad i=0,...,N-1, \label{exact4}
\end{align}
where \eqref{exact3} follows from
\begin{equation} \label{exact3*}
 \begin{array}{l}
 \displaystyle u(t_{i+1})= G^{-1}Q_2 f(t_{i+1},z(t_{i+1})+u(t_i))+{} \smallskip\\
 \displaystyle  {}+ G^{-1} \frac{\partial Q_2 f}{\partial x} (t_{i+1},z(t_{i+1})+u(t_i))[u(t_{i+1})-u(t_i)]+O(h). \end{array}
\end{equation}
The corresponding numerical method takes the form \eqref{met1}--\eqref{met4}  (\cite{Filnummeth}),  where \eqref{met3} is obtained from
\begin{equation}
u_{i+1}=G^{-1}Q_2 f(t_{i+1},z_{i+1}+u_i) +  G^{-1} \frac{\partial Q_2 f}{\partial x}(t_{i+1},z_{i+1} +  u_i)[u_{i+1}-u_i]. \label{met3*}
\end{equation}

Since $\frac{\partial f}{\partial x}(t,x)$ is continuous on $[0,\infty)\times \Rn$, then there exists the constant
\begin{equation}\label{M1}
M_1=\mathop{\sup }\limits_{0<\theta_1 <1} \bigg\|G^{-1}  \frac{\partial Q_1 f}{\partial x}(t_i,x_i + \theta_1(x(t_i)-x_i))\bigg\|
\end{equation}
such that (by the formula of finite increments) $\|G^{-1} Q_1[f(t_i,z(t_i)+u(t_i))- f(t_i,z_i+u_i)]\|\le M_1 \big(\|z(t_i)-z_i\|+ \|u(t_i)-u_i\|\big)$.   Then using \eqref{exact2}, \eqref{met2} we obtain the estimate
\begin{equation}\label{num3}
 \|z(t_{i+1})- z_{i+1}\|\le \big(\|I-h G^{-1} B\| + h M_1\big) \|z(t_i)- z_i\|+ h M_1\|u(t_i)-u_i\|+ O(h^2).
\end{equation}

Denote $\varepsilon ^z_{i+1} = \|z(t_{i+1})-z_{i+1}\|$, $\varepsilon ^u_{i+1}= \|u(t_{i+1})-u_{i+1}\|$, $g(h) = \|I-h G^{-1} B\| +h M_1$. Then  \eqref{num3} takes the form
\begin{equation}\label{num6}
 \varepsilon ^z_{i+1}\le g(h)\, \varepsilon ^z_i+h M_1\, \varepsilon ^u_i +O(h^2).
\end{equation}
It follows from the initial condition and \eqref{met2}, \eqref{exact2} that $\varepsilon ^z_0= 0$, $\varepsilon ^u_0 = 0$ and $\varepsilon ^z_1= O(h^2)$. Using \eqref{num6}, we find recurrently the estimate
\begin{equation}\label{num8}
\varepsilon ^z_{i+1}\le h M_1 \sum\limits_{j=0}^i g^{i-j}(h) \varepsilon ^u_j+ O(h^2)\sum\limits_{j=0}^i  g^j(h).
\end{equation}
Since $g^j(h) \le (1+h(\|G^{-1} B\|+M_1))^j \le e^{(T-t_0)(\|G^{-1} B\| + M_1)}$, $j=1,...,N$, then
\begin{equation}\label{num10}
 \varepsilon ^z_{i+1}\le O(h) \sum\limits_{j=0}^i \varepsilon ^u_j+ O(h),\quad i=0,...,N-1.
\end{equation}

Using \eqref{exact3*}, \eqref{met3*} we obtain
\begin{gather*}
u(t_{i+1}) - u_{i+1} =  \left[I-G^{-1} \frac{\partial Q_2 f}{\partial x}(t_{i+1},z_{i+1}+u_i) \right]^{-1}  \Big[ G^{-1}\frac{\partial Q_2f}{\partial x} (t_{i+1},z(t_{i+1}) + u(t_i)) [z(t_{i+1})-\\
-z_{i+1}] -  G^{-1}\frac{\partial Q_2f}{\partial x} (t_{i+1},z_{i+1} + u_i) [u(t_{i+1}) - u_i]+O(\|z(t_{i+1}) - z_{i+1}+u(t_i) - u_i \|)+O(h)\Big].
\end{gather*}
Denote $\displaystyle C_1=\! \mathop{\sup }\limits_{0\le i\le N-1}\! \left\|G^{-1}\frac{\partial Q_2f}{\partial x} (t_{i+1},z(t_{i+1})+u(t_i))\right\|\!$,  $\displaystyle C_2 =\!  \mathop{\sup }\limits_{0\le i\le N-1}\! \left\|G^{-1}\frac{\partial Q_2f}{\partial x} (t_{i+1},z_{i+1}+u_i)\right\|\!$,
\begin{equation}\label{C3}
C_3=\mathop{\sup }\limits_{0\le i\le N-1}\left\|\left[I-G^{-1}\frac{\partial Q_2f}{\partial x} (t_{i+1},z_{i+1}+u_i)\right]^{-1}\right\|.
\end{equation}
Then $\varepsilon ^u_{i+1}\le C_3 [ C_1 \varepsilon ^z_{i+1}+C_2\varepsilon ^u_i+ (C_2+1)O(h)+O(\varepsilon ^z_{i+1}+\varepsilon ^u_i)]$. Consequently, there exist constants $\alpha $, $\beta $ such that
\begin{equation}\label{num5}
\varepsilon ^u_{i+1}\le \alpha \varepsilon ^z_{i+1}+ \beta \varepsilon ^u_i +O(h),\quad i=0,...,N-1.
\end{equation}
From \eqref{num5} we obtain that $\varepsilon ^u_1\le O(h)$ and, taking into account \eqref{num10},  $\varepsilon ^u_{i+1}\le O(h) \sum\limits_{j=1}^i \varepsilon ^u_j+ \beta \varepsilon ^u_i +O(h)$, $i=1,...,N-1$.  By using the method of mathematical induction, it is easy to prove that
$\varepsilon ^u_{i+1}\le O(h)$, $i=0,...,N-1$.  Then from \eqref{num10} it follows that $\varepsilon ^z_{i+1}\le O(h)$, $i=0,...,N-1$. Thus, $\mathop{\max }\limits_{0\le i\le N} \varepsilon ^u_i = O(h)$ and $\mathop{\max }\limits_{0\le i\le N} \varepsilon ^z_i = O(h)$. Hence, $\mathop{\max }\limits_{0\le i\le N}  \left\| \left( \begin{array}{c} z(t_i)-z_i \\ u(t_i)-u_i \end{array}  \right) \right\|  \le K h$, where $K >0$ is some constant, and the method \eqref{met1}--\eqref{met4} converges and has the first order of accuracy.
\endproof
\begin{remark}\label{remNum-meth}
If in Theorem~\ref{ThNum-meth} we do not require the additional smoothness of the function $f$, i.e., we assume that $f\in C([0,\infty)\times \Rn, \Rn)$ and $\displaystyle\frac{\partial f}{\partial x}  \in C([0,\infty)\times \Rn, L(\Rn))$, then the method \eqref{met1}--\eqref{met4} converges, but may not have the first order of accuracy:  $\mathop{\max }\limits_{0\le i\le N} \|z(t_i)-z_i\|=o(1)$, $h\to 0$\, (i.e., $\mathop{\max }\limits_{0\le i\le N} \|z(t_i)-z_i\|\to 0$, $h\to 0$),\; $\mathop{\max}\limits_{0\le i\le N} \|u(t_i)-u_i\|=o(1)$, $h\to 0$.
\end{remark}
\proof{The proof of Remark~\ref{remNum-meth}}  The proof is carried out in the same way as the proof of Theorem~\ref{ThNum-meth}, where instead of \eqref{Taylor1}, \eqref{Taylor2} we use the representations
\begin{equation}\label{Taylor1o1}
\frac{dz}{dt}(t)=\frac{z(t+h)-z(t)}{h}+o(1),\; h\to 0,
\end{equation}
\begin{equation}\label{Taylor2o1}
 \begin{array}{c}
 \displaystyle G^{-1} Q_2 f(t+h,z(t + h)+u(t + h)) = G^{-1} Q_2 f(t + h,z(t+h)+u(t))+  \smallskip\\
 \displaystyle +G^{-1} \frac{\partial Q_2 f}{\partial x}(t+h,z(t+h)+u(t)) [u(t+h)-u(t)]+o(1),\; h\to 0. \end{array}
\end{equation}
\endproof

\subsection{Method 2 (modified method 1)}\label{ModNum-meth}
\begin{theorem}\label{ThModNum-meth}
Let the conditions of Theorem~\textup{\ref{ThRF1}} \textup{(}or Corollary~\textup{\ref{conseq-ThRF1}}\textup{)} or the conditions of part~\textup{I} of Theorem~\textup{\ref{Th_Ust1}} be satisfied.  Let, additionally, $f\in C^2([t_0,T]\times \Rn, \Rn)$, and the operator function $\Phi $ \eqref{funcPhi}\, \textup{(}where $t_*=t$, $x_{p_1}^*=z$, $x_{p_2}=u$\textup{)} be invertible for any points $(t,z+u)\in [t_0,T]\times \Rn$.  Then the method
\begin{align}
\hspace*{-0.3cm} &x_0 = z_0 +u_0,\quad z_1=(I-h G^{-1} B) z_0+h G^{-1} Q_1 f(t_0,z_0+u_0), \label{Impmet1}\\
\hspace*{-0.3cm} &u_{i+1}\!=\! u_i-\!\bigg[I - G^{-1}{\displaystyle \frac{\partial Q_2 f}{\partial x}} (t_{i+1},z_{i+1}+u_i)\bigg]^{-1}\! \big[u_i- G^{-1}Q_2f(t_{i+1},z_{i+1}+u_i)\big],\, i=0,...,N-1, \label{Impmet3}\\
\hspace*{-0.3cm} &z_{i+1}=z_{i-1} + 2h G^{-1}[Q_1 f(t_i,z_i+u_i) -B z_i],\: i=1,...,N-1, \label{Impmet2}\\
\hspace*{-0.3cm} &x_{i+1}=z_{i+1}+u_{i+1},\; i=0,...,N-1, \label{Impmet4}
\end{align}
approximating the initial value problem \eqref{DAE}, \eqref{ini} on $[t_0,T]$, converges and has the second order of accuracy\,   \textup{(}$\mathop{\max }\limits_{0\le i\le N} \|z(t_i)-z_i\|=O(h^2)$, $\mathop{\max }\limits_{0\le i\le N} \|u(t_i)-u_i\|=O(h^2)$\textup{)}.
\end{theorem}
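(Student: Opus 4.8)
The plan is to mimic the proof of Theorem~\ref{ThNum-meth}, exploiting the extra smoothness $f\in C^2$ to obtain $O(h^2)$ Taylor remainders and replacing the forward-difference approximation of $dz/dt$ by a centered one. First, by the conditions of Theorem~\ref{ThRF1} (or Corollary~\ref{conseq-ThRF1}, or part~I of Theorem~\ref{Th_Ust1}) there is a unique global solution $x(t)$ of \eqref{DAE}, \eqref{ini}; by Remark~\ref{RemDiff}, the smoothness $f\in C^2$ gives $z(t)=P_1x(t)\in C^3([t_0,T],X_1)$ and $u(t)=P_2x(t)\in C^2([t_0,T],X_2)$. As in the proof of Theorem~\ref{ThNum-meth}, the equation \eqref{DAE} is equivalent to the system \eqref{num1}, \eqref{num2}, and exactly the same argument as there shows that $\bigl[I-G^{-1}\frac{\partial Q_2 f}{\partial x}(t,z+u)\bigr]^{-1}\in L(X_2)$ exists for all $(t,z+u)\in[t_0,T]\times\Rn$.

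Second, I would write down the exact-solution counterparts of the scheme. For the algebraic equation \eqref{num2}, expanding $G^{-1}Q_2 f$ by the Taylor formula in the last argument about $u(t)$ and using $u\in C^2$ together with $u(t+h)-u(t)=O(h)$, the quadratic term is $O(h^2)$, so \eqref{Taylor2} holds with $O(h^2)$ in place of $O(h)$; consequently \eqref{exact3*} and \eqref{exact3} hold with remainder $O(h^2)$ for $i=0,\dots,N-1$, and in particular, since $z(t_1)-z_1=O(h^2)$ (one explicit Euler step, $z\in C^3$) and $u(t_0)-u_0=0$, one gets $u(t_1)-u_1=O(h^2)$. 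For the differential equation \eqref{num1}, since $z\in C^3$ the symmetric difference gives $\frac{dz}{dt}(t)=\frac{z(t+h)-z(t-h)}{2h}+O(h^2)$, so for $i=1,\dots,N-1$ the exact solution satisfies $z(t_{i+1})=z(t_{i-1})+2hG^{-1}[Q_1f(t_i,z(t_i)+u(t_i))-Bz(t_i)]+O(h^3)$, while the starting value obeys $z(t_1)=(I-hG^{-1}B)z(t_0)+hG^{-1}Q_1 f(t_0,z(t_0)+u(t_0))+O(h^2)$. Subtracting \eqref{Impmet1}--\eqref{Impmet4} and applying the finite-increments formula to the $C^1$ maps $Q_1 f$, $Q_2 f$, $\frac{\partial Q_2 f}{\partial x}$ (the discrete and exact trajectories lie in a bounded set, so the relevant suprema, the analogues of $M_1$ and of $C_1,C_2,C_3$ from \eqref{M1}--\eqref{C3}, are finite), I obtain, with $\varepsilon^z_i=\|z(t_i)-z_i\|$, $\varepsilon^u_i=\|u(t_i)-u_i\|$, estimates of the form
\[
\varepsilon^z_{i+1}\le\varepsilon^z_{i-1}+Ch\bigl(\varepsilon^z_i+\varepsilon^u_i\bigr)+O(h^3),\qquad
\varepsilon^u_{i+1}\le\alpha\,\varepsilon^z_{i+1}+\beta\,\varepsilon^u_i+O(h^2),
\]
together with $\varepsilon^z_0=\varepsilon^u_0=0$, $\varepsilon^z_1=O(h^2)$, $\varepsilon^u_1=O(h^2)$.

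Third, I would close the estimates. The two-step recursion for $\varepsilon^z$ is handled by passing to $\mu_i=\max(\varepsilon^z_i,\varepsilon^z_{i-1})$, which satisfies $\mu_{i+1}\le(1+Ch)\mu_i+Ch\,\varepsilon^u_i+O(h^3)$; unrolling and using $(1+Ch)^i\le e^{C(T-t_0)}$, $\mu_1=O(h^2)$ and $i\cdot O(h^3)=O(h^2)$ yields $\varepsilon^z_{i+1}\le O(h^2)+O(h)\sum_{j=0}^i\varepsilon^u_j$. Substituting this into the $u$-recursion gives $\varepsilon^u_{i+1}\le\beta\,\varepsilon^u_i+O(h)\sum_{j=0}^i\varepsilon^u_j+O(h^2)$, and the method of mathematical induction, exactly as in the proof of Theorem~\ref{ThNum-meth}, yields $\varepsilon^u_{i+1}=O(h^2)$ for $i=0,\dots,N-1$; then $\sum_{j}\varepsilon^u_j=O(h)$ and the bound for $\varepsilon^z$ gives $\varepsilon^z_{i+1}=O(h^2)$. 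Hence $\max_i\|z(t_i)-z_i\|=O(h^2)$ and $\max_i\|u(t_i)-u_i\|=O(h^2)$, so \eqref{Impmet1}--\eqref{Impmet4} converges and has the second order of accuracy.

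The main obstacle I expect is the weak stability of the leapfrog-type recursion \eqref{Impmet2} for the $z$-component: unlike the plain Euler step of Method~1, the centered difference carries a parasitic root, so one cannot just iterate the one-step bound \eqref{num6}. The remedy is the $\mu_i=\max(\varepsilon^z_i,\varepsilon^z_{i-1})$ device above, which shows the per-step amplification is still $1+O(h)$ and therefore bounded over the whole mesh; one must also check that the starting value $z_1$, produced by a single Euler step, contributes only $O(h^2)$, which is consistent with the claimed global order. The rest is the same bootstrap between the $z$- and $u$-errors as in Theorem~\ref{ThNum-meth}.
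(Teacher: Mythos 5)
Your overall architecture coincides with the paper's: the same Taylor expansions with $O(h^2)$/$O(h^3)$ remainders, the same exact-solution counterparts of \eqref{Impmet1}--\eqref{Impmet4}, and your device $\mu_i=\max(\varepsilon^z_i,\varepsilon^z_{i-1})$ for the leapfrog recursion is equivalent to the paper's passage to the vector $\Hat\varepsilon^z_i$ and the $2\times 2$ majorant matrix $\Hat g(h)$ in the max-norm; both give the paper's bound \eqref{epsz2}, $\varepsilon^z_{i+1}\le O(h)\sum_j\varepsilon^u_j+O(h^2)$. The gap is in your $u$-error recursion and the induction meant to close it. You carry over the Theorem~\ref{ThNum-meth}-type estimate $\varepsilon^u_{i+1}\le\alpha\,\varepsilon^z_{i+1}+\beta\,\varepsilon^u_i+O(h^2)$ with $\alpha,\beta$ fixed constants (your $\beta$ is of the order of $C_3C_2$ from \eqref{C3}, hence in general not smaller than $1$). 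That inequality is true but too lossy: from $\varepsilon^u_{i+1}\le\beta\varepsilon^u_i+O(h)\sum_j\varepsilon^u_j+O(h^2)$ one only gets growth like $\beta^i h^2$, and the inductive step $\varepsilon^u_l\le Kh^2\ (l\le i)\Rightarrow\varepsilon^u_{i+1}\le(\beta+C(T-t_0))Kh^2+ch^2\le Kh^2$ would require $\beta+C(T-t_0)<1$, which you have not established. So ``exactly as in the proof of Theorem~\ref{ThNum-meth}'' does not go through, and the claimed $O(h^2)$ bound does not follow from your recursion.

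What the paper's proof uses, and what is missing from yours, is the cancellation coming from the Newton-like structure of \eqref{Impmet3} combined with $f\in C^2$. Expanding both $Q_2f$ and $\frac{\partial Q_2f}{\partial x}$ about the numerical point $(t_{i+1},z_{i+1}+u_i)$ and setting $\Delta=[z(t_{i+1})-z_{i+1}]+[u(t_i)-u_i]$, one has $\Delta+[u(t_{i+1})-u(t_i)]-[u_{i+1}-u_i]=[z(t_{i+1})-z_{i+1}]+[u(t_{i+1})-u_{i+1}]$, so the linear terms in $u(t_i)-u_i$ cancel exactly and $\bigl[I-G^{-1}\frac{\partial Q_2f}{\partial x}(t_{i+1},z_{i+1}+u_i)\bigr][u(t_{i+1})-u_{i+1}]$ equals a term linear in $z(t_{i+1})-z_{i+1}$ plus $O(\|\Delta\|^2+\|\Delta\|\,\|u(t_{i+1})-u(t_i)\|)+O(h^2)$. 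This is precisely the paper's estimate $\varepsilon^u_{i+1}\le 3C_3\bigl[\varepsilon^z_{i+1}+O\bigl((\varepsilon^z_{i+1})^2+(\varepsilon^u_i)^2\bigr)+O(h^2)\bigr]$: the dependence on $\varepsilon^u_i$ is only quadratic (equivalently, linear with an $O(h)$ coefficient), so the per-step amplification in the $u$-component is $1+O(h)$, and the induction starting from $\varepsilon^u_1=O(h^2)$, combined with \eqref{epsz2}, closes to give $\varepsilon^u_i=O(h^2)$ and then $\varepsilon^z_i=O(h^2)$. To repair your write-up, replace your linear $\beta\varepsilon^u_i$ bound by this second-order expansion of the $u$-update; the rest of your argument (the treatment of the two-step $z$-recursion and the final bootstrap) then matches the paper.
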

\proof{Proof}
By virtue of the theorem conditions, there exists a unique global (exact) solution $x(t)$ of the initial value problem \eqref{DAE}, \eqref{ini} such that $z(t)=P_1 x(t)\in C^3([t_0,T],X_1)$ and $u(t)=P_2 x(t)\in C^2([t_0,T],X_2)$.

Taking into account the smoothness of the solution and the function $f$, we obtain
\begin{align}
&\frac{dz}{dt}(t)=\frac{z(t+h)-z(t-h)}{2h}+O(h^2), \label{Taylor3}\\
&\begin{array}{r}
 \displaystyle G^{-1} Q_2 f(t+h,z(t+h)+u(t+h))=G^{-1} Q_2 f(t+h,z(t+h)+u(t)) + \smallskip\\
 \displaystyle +G^{-1} \frac{\partial Q_2 f}{\partial x}(t+h,z(t+h)+u(t)) [u(t+h)-u(t)]+O(h^2).\end{array}\label{Taylor4}
\end{align}

As shown in the proof of Theorem~\ref{ThNum-meth}, there exists the inverse operator $\bigg[I - G^{-1} \displaystyle\frac{\partial Q_2 f}{\partial x} (t,z+u)\bigg]^{-1}\in L(X_2)$ for any points  $(t,z+u)$  from $L_0$ and $[t_0,T]\times \Rn$.   Using \eqref{Taylor1}, \eqref{Taylor3}, \eqref{Taylor4} and the system \eqref{num1}, \eqref{num2} equivalent to the equation~\eqref{DAE}, we can write the problem \eqref{DAE}, \eqref{ini} at the points of the introduced mesh $\omega_h$ as
\begin{align}
 & \begin{array}{l}
  \displaystyle x(t_0) = z(t_0)+u(t_0),\; z(t_0) = z_0,\; u(t_0)=u_0,  \smallskip\\
  \displaystyle  z(t_1)=(I-h G^{-1} B) z_0+h G^{-1} Q_1 f(t_0,z_0+u_0)+O(h^2).\end{array}  \label{Impexact1}\\
 & \begin{array}{l}
  \displaystyle  u(t_{i+1}) = u(t_i)- \bigg[I- G^{-1} \frac{\partial Q_2 f}{\partial x}(t_{i+1},z(t_{i+1})+u(t_i))\bigg]^{-1}\! \big[u(t_i)-  \smallskip\\
  \displaystyle  -G^{-1}Q_2 f(t_{i+1},z(t_{i+1})+u(t_i))\big]+  O(h^2),\; i=0,...,N-1,  \end{array}  \label{Impexact3} \\
 & z(t_{i+1})=z(t_{i-1}) + 2h G^{-1}[Q_1 f(t_i,z(t_i)+u(t_i))  - B z(t_i)] + O(h^3),\; i = 1,...,N-1, \label{Impexact2}\\
 & x(t_{i+1})=z(t_{i+1})+u(t_{i+1}),\; i = 0,...,N-1, \label{Impexact4}
\end{align}
The corresponding numerical method takes the form   \eqref{Impmet1}--\eqref{Impmet4},  where $z_1$ coincides with $z_1$ from \eqref{met2} and \eqref{Impmet3} is obtained from \eqref{met3*}.

Denote $\varepsilon ^z_{i+1}= \|z(t_{i+1})-z_{i+1}\|$, $\varepsilon ^u_{i+1}= \|u(t_{i+1})-u_{i+1}\|$. Obviously, $\varepsilon ^z_0= 0$, $\varepsilon ^u_0 = 0$, $\varepsilon ^z_1= O(h^2)$, and there exists the constant $M_1$ \eqref{M1} such that
\begin{equation}\label{epsz1}
 \varepsilon ^z_{i+1}\le \varepsilon ^z_{i-1}+2h(\|G^{-1} B\| +M_1)\varepsilon ^z_i+ 2 h M_1\, \varepsilon ^u_i +O(h^3).
\end{equation}

Consider the system consisting of the inequality \eqref{epsz1} and equality $\varepsilon ^z_i=\varepsilon ^z_i$. Denoting
$\Hat\varepsilon ^z_{i+1}=\begin{pmatrix} \varepsilon ^z_{i+1} \\ \varepsilon ^z_i\end{pmatrix}$, $e_1 =\begin{pmatrix} 1 \\ 0\end{pmatrix}$, $\Hat g(h)=\begin{pmatrix} 2h(\|G^{-1} B\|+M_1) & 1 \\ 1 & 0 \end{pmatrix}$,  we obtain the representation of this system as
\begin{equation}\label{hatepsz}
 \Hat\varepsilon ^z_{i+1}\le  \Hat g(h) \Hat\varepsilon ^z_i+ 2h M_1 e_1 \varepsilon ^u_i +e_1 O(h^3),\quad i=1,...,N-1.
\end{equation}
Using \eqref{hatepsz}, we find recurrently the estimate
\begin{equation}\label{hatepsz2}
 \Hat\varepsilon ^z_{i+1}\le  \Hat g^i(h) \Hat\varepsilon ^z_1 +2 h M_1 \sum\limits_{j=1}^i \Hat g^{i-j}(h) e_1\varepsilon ^u_j+ O(h^3)\sum\limits_{j=1}^i \Hat g^{j-1}(h) e_1.
\end{equation}
Define the norm of $\Hat\varepsilon ^z_{i+1}$ as $\|\Hat\varepsilon ^z_{i+1}\|=\max\{|\varepsilon ^z_{i+1}|, |\varepsilon ^z_i|\}$.  Then $\|e_1\|=1$, $\|\Hat\varepsilon ^z_1\|=\varepsilon ^z_1=O(h^2)$,  ${\|\Hat g(h)\|=1+2h(\|G^{-1} B\| +M_1)}$,\,  $\varepsilon ^z_{i+1}\le \|\Hat\varepsilon ^z_{i+1}\|$ and consequently
\begin{equation}\label{num8new}
 \varepsilon ^z_{i+1}\le  2 h M_1 \sum\limits_{j=1}^i \|\Hat g(h)\|^{i-j} \varepsilon ^u_j+ O(h^3)\sum\limits_{j=1}^i \|\Hat g(h)\|^{j-1}+ O(h^2).
\end{equation}
Since $\|\Hat g(h)\|^j \le e^{2(T-t_0)(\|G^{-1} B\| + M_1)}$, $j=1,...,N-1$, then
\begin{equation}\label{epsz2}
 \varepsilon ^z_{i+1}\le O(h) \sum\limits_{j=1}^i \varepsilon ^u_j+ O(h^2),\quad i=1,...,N-1.
\end{equation}

Using \eqref{Taylor4}, \eqref{met3*}  we obtain
\begin{gather*}
u(t_{i+1})-u_{i+1}=\left[I-G^{-1} \frac{\partial Q_2 f}{\partial x}(t_{i+1},z_{i+1}+u_i) \right]^{-1}\Big[z(t_{i+1})-z_{i+1} + \\
+O(\|z(t_{i+1})-z_{i+1}+u(t_i)-u_i \|^2+\|z(t_{i+1})-z_{i+1}+u(t_i)-u_i \|\, \|u(t_{i+1})-u(t_i) \|)+O(h^2)\Big].
\end{gather*}
Denote $C_3$ as \eqref{C3}. It is easy to verify that  $\varepsilon ^u_{i+1}\le 3C_3 \big[\varepsilon ^z_{i+1}+O\big((\varepsilon ^z_{i+1})^2+ (\varepsilon ^u_i)^2\big)+O(h^2)\big]$.  Taking into account \eqref{epsz2}, we easy obtain  $\varepsilon ^u_{i+1}\le O(h) \sum\limits_{l=1}^i \varepsilon ^u_l +O(h^2)\Big(\sum\limits_{l=1}^i \varepsilon ^u_l\Big)^2+\Hat C_3 (\varepsilon ^u_i)^2 + O(h^2)$,  where $\Hat C_3 $ is some constant. From the obtained estimate, using the polynomial (multinomial) theorem, we obtain
\begin{equation}\label{epsu1}
\varepsilon ^u_{i+1}\le O(h) \sum\limits_{l=1}^i \big(\varepsilon ^u_l+ (\varepsilon ^u_l)^2 \big) +\Hat C_3 (\varepsilon ^u_i)^2 + O(h^2),\quad i=1,...,N-1.
\end{equation}
Since $\varepsilon ^u_1\le O(h^2)$, then, by using the method of mathematical induction, it is easy to prove that $\varepsilon ^u_{i+1}\le O(h^2)$, $i=1,...,N-1$. From the obtained estimate and \eqref{epsz2} it follows that $\varepsilon ^z_{i+1}\le O(h^2)$, $i=1,...,N-1$.  Thus, $\mathop{\max }\limits_{0\le i\le N} \varepsilon ^u_i= O(h^2)$ and $\mathop{\max }\limits_{0\le i\le N} \varepsilon ^z_i= O(h^2)$.  Hence, the method \eqref{Impmet1}--\eqref{Impmet4} converges and has the second order of accuracy.
\endproof
\begin{remark}\label{remModNum-meth}
Analogously to Remark~\ref{remNum-meth}, if in Theorem~\ref{ThModNum-meth} $f\in C([0,\infty)\times \Rn, \Rn)$ and $\displaystyle\frac{\partial f}{\partial x}  \in C([0,\infty)\times \Rn, L(\Rn))$, then the method \eqref{Impmet1}--\eqref{Impmet4} converges:
${\mathop{\max }\limits_{0\le i\le N} \|z(t_i)-z_i\|=o(1)}$,~${h\to 0}$, ${\mathop{\max }\limits_{0\le i\le N} \|u(t_i)-u_i\|=o(1)}$,~${h\to 0}$.
\end{remark}
\proof{The proof of Remark~\ref{remModNum-meth}}
The proof is carried out in the same way as the proof of Theorem~\ref{ThModNum-meth}, where instead of \eqref{Taylor1}, \eqref{Taylor3}, \eqref{Taylor4} we use the representations \eqref{Taylor1o1}, $\displaystyle\frac{dz}{dt}(t)=\frac{z(t+h)-z(t-h)}{2h}+o(1)$,~$h\to 0$, and~\eqref{Taylor2o1}.
\endproof
\begin{remark}\label{globBasInv}
If the operator function $\Phi$ \eqref{funcPhi} is basis invertible on $[x^1_{p_2},\,x^2_{p_2}]$ for any $x^1_{p_2},\, x^2_{p_2}\in  X_2$, $t_*\in [0,\infty)$, $x^*_{p_1}\in X_1$, then the additional condition of the invertibility of $\Phi$ for any $(t,z+u)\in [t_0,T]\times \Rn$ from Theorems~\ref{ThNum-meth},~\ref{ThModNum-meth}, as well as the additional condition of the basis invertibility of $\Phi$ on $(\Tilde x_{p_2},x_{p_2}^*]$ from part~\textup{II} of Theorem~\textup{\ref{Th_Ust1}}, are not needed.
\end{remark}

\section{The application of the numerical methods to the analysis of mathematical models}\label{Appl}

Differential-algebraic (descriptor, degenerate or implicit differential) equations have a wide range of practical applications.  Such equations arise from the mathematical modeling of the dynamics of physical, economic, technical, ecological and other processes because of the availability of algebraic connections between the coordinates of the vectors of states of the corresponding dynamical systems.  Semilinear DAEs are used in modeling the dynamics of complex mechanical and robotic systems (\cite{RabierRh}, \cite{FoxJenZom}), various descriptor systems and neural networks (\cite{RiazaZufiria}, \cite{KunkelMehrmann}, \cite{EngwerdaSalmW}), transient processes in electrical circuits (\cite{Riaza}, \cite{BrenanCP}, \cite{LamourMarzTisch}, \cite{RF2}), an inter-industry balance, and other objects and processes (\cite{Vlasenko1}).

Consider the mathematical model designed in \cite{Fil.MPhAG} for a certain nonlinear electrical circuit.  This model is described by the semilinear DAE \eqref{DAE}, where $x=(x_1, x_2, x_3)^T=(I_L, U_C, I)^T$,
\begin{equation}\label{two-pole}
 A=\begin{pmatrix} L & 0 & 0 \\ 0 & C & 0 \\ 0 & 0 & 0 \end{pmatrix},\,
 B=\begin{pmatrix} 0 & 1 & r \\ 0 & g & -1 \\ 0 & 1 & r \end{pmatrix},\,
 f(t,x)=\begin{pmatrix} e(t)-\varphi_0(x_1)-\varphi(x_3) \\ -h(x_2) \\ \psi(x_1-x_3)-\varphi(x_3) \end{pmatrix}.
\end{equation}
Here $I_L$, $I$  are unknown currents, and $U_C$ is an unknown voltage. An input voltage $e$ is given.  The remaining currents and voltages in the circuit are uniquely expressed in terms of  $I_L$, $U_C$ and $I$.  Also, here $L$ is an inductance, $C$  is a capacitance, $r$  is a linear resistance, $g$  is a linear conductance, $\varphi$, $\varphi _0$, $\psi$ are nonlinear resistances, and $h$ is a nonlinear conductance.  It is assumed that $L,\, C,\, r,\, g>0$, $x\in {\mathbb R}^3$, $e \in C([0,\infty ),{\mathbb R})$, and $\varphi _0 $, $\varphi $, $\psi$, $h \in C^1 ({\mathbb R})$.  Inductance is given in henries (H), capacitance is given in farads (F), resistance is given in ohm ($\Omega$), and conductance is given in siemens (S).  When computations are carried out, it is assumed that inductance, capacitance and time are given in $\mu$H (microhenries), $\mu$F and $\mu$s, respectively.    For example, in Section~\ref{CompareMeth}, the parameters $L = 500$, $C = 0.5$  ($L = 500\; \mu\text{H} =5\cdot 10^{-4} \text{ H}$, $C =0.5\; \mu\text{F}=5\cdot 10^{-7} \text{ F}$) are used to carry out computations and it is assumed that the numerical solutions are obtained for the initial parameters on time intervals given in $\mu$s ($1\, \mu\text{s}=10^{-6} \text{ s}$).  It is easy to verify that this transition is true. It is done in order to the small values of $L$ and $C$ do not lead to large roundoff errors in the computations.

The conditions for the unique global solvability and the Lagrange stability of the DAE \eqref{DAE} with \eqref{two-pole} for arbitrary $\varphi _0$, $\varphi$, $\psi$, $h$, $e$ are given in \cite{Fil.MPhAG}. Below we present some classes of functions for which these conditions are satisfied.

In \cite{Fil.MPhAG} it is proved that for each initial point $(t_0,x^0)$, satisfying the consistency condition $x_2^0 +r x_3^0= \psi(x_1^0-x_3^0)-\varphi(x_3^0)$, there exists a unique global solution of the DAE~\eqref{DAE} with \eqref{two-pole} and the initial condition $x(t_0)=x^0$  for the functions of the form $\varphi_0(x_1)=\alpha _1 x_1^{2k-1}$, $\varphi(x_3)=\alpha _2 x_3^{2l-1}$, $\psi(x_1-x_3)=\alpha _3 (x_1-x_3)^{2j-1}$, $h(x_2)=\alpha _4 x_2^{2s-1}$,\: $k,l,j,s\in \mathbb N$, $\alpha _i >0$, $i=1,...,4$,\; if $j\le k$, $j\le s$ and $\alpha _3$ is sufficiently small, and for the functions of the form $\varphi _0 (x_1)=\alpha _1 x_1^{2k-1}$, $\varphi(x_3)=\alpha _2 \sin x_3$, $\psi(x_1-x_3)=\alpha _3\sin (x_1-x_3)$, $h(x_2)=\alpha _4 \sin x_2$ (instead of $\sin$ one can use $\cos$),\; if ${\alpha _2 + \alpha _3 < r}$.   If, additionally, $\mathop{\sup }\limits_{t\in [0,\infty )}|e(t)| < +\infty$ or $\int\limits_{t_0}^{+\infty}  |e(t)|\, dt <+\infty$,  then for the initial points $(t_0,x^0)$ the DAE is Lagrange stable (in both cases), i.e., every its solution is bounded. These conditions are fulfilled, for example, for voltages of the form $e(t)=\beta\sin({\omega t+\theta})$, $e(t)=\beta(t+\alpha)^{-n}$, $e(t)=\beta e^{-\alpha t}$, $\alpha> 0$, $\beta,\, \omega \in {\mathbb R}$, $n \in {\mathbb N}$, $\theta \in [0, 2\pi]$. For the voltage $e(t)= \beta(t+\alpha)^n$,\, $\alpha,\,\beta \in {\mathbb R}$, $n \in {\mathbb N}$, global solutions exist, but they are not bounded on the whole domain of definition.

The plots of numerical solutions for some particular cases are presented below. The parameters are given in the units mentioned above and time $t$ is given in~$\mu$s.

Choose the linear parameters $L = 5\cdot 10^{-4}$, $C = 5\cdot 10^{-7}$,  $r = 2$, $g =0.2$, the nonlinear resistances $\varphi_0 (x_1)= x_1^3$, $\varphi(x_3)= \sin x_3$, $\psi(x_1-x_3)=\sin (x_1-x_3)$ and conductance $h(x_2)= \sin x_2$, the input voltage $e(t)=(2 t+10)^{-2}$ and the initial data $t_0 = 0$, $x_0 =(10,-10,5)^T$. In this case, the solution is Lagrange stable, i.e., global and bounded.  The plots for the numerical solution are presented in Figure~\ref{MM_Ex5_10-12}.
\begin{figure}[H]%
\centering
\includegraphics[width=6cm]{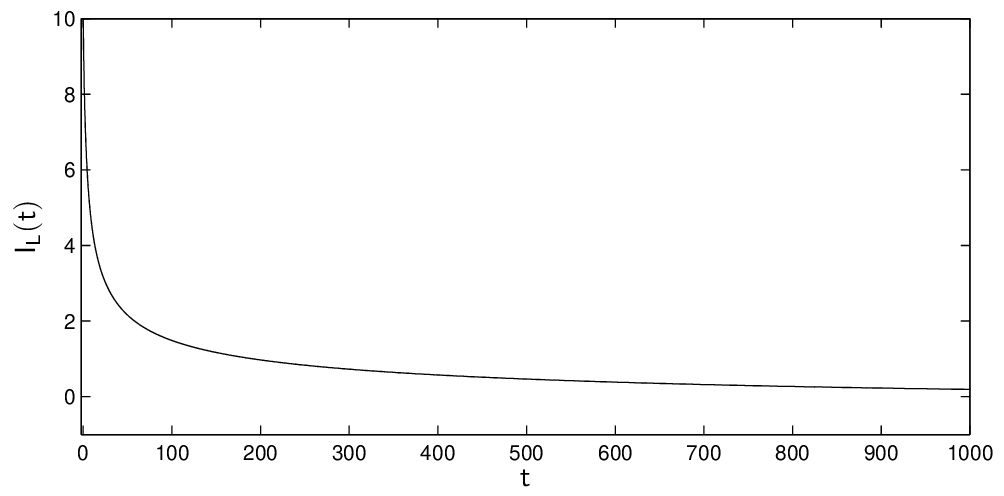}
\includegraphics[width=6cm]{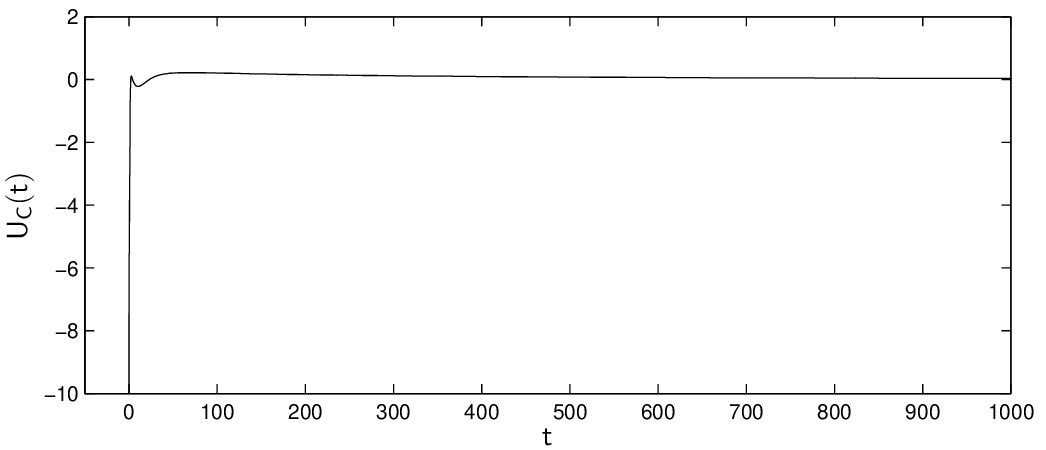}
\includegraphics[width=6cm]{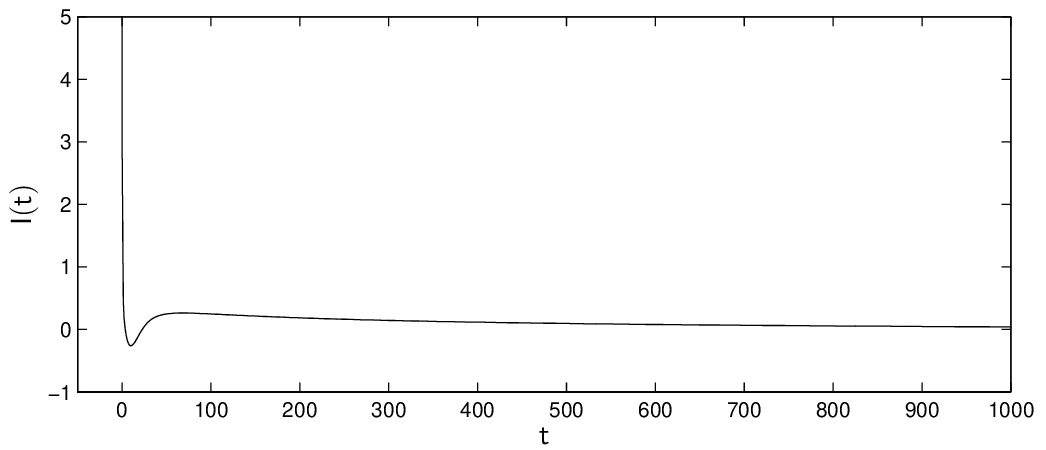}
\caption{The components of the numerical solution}\label{MM_Ex5_10-12}
\end{figure}

If $e(t) = t^2$ and $L= 10^{-3}$, $C = 5\cdot 10^{-7}$, $r =2$, $g =0.3$, $\varphi_0(x_1) = x_1^3$, $\varphi(x_3)= x_3^3$, $h(x_2)= x_2^3$, $\psi(x_1-x_3)= (x_1-x_3)^3$, $t_0 = 0$, $x_0 =(0,0,0)^T$, then a solution is global, but not bounded. The components of the numerical solution are shown in Figure~\ref{MM_Ex5_1-3k}.
\begin{figure}[H]%
\centering
\includegraphics[width=6cm]{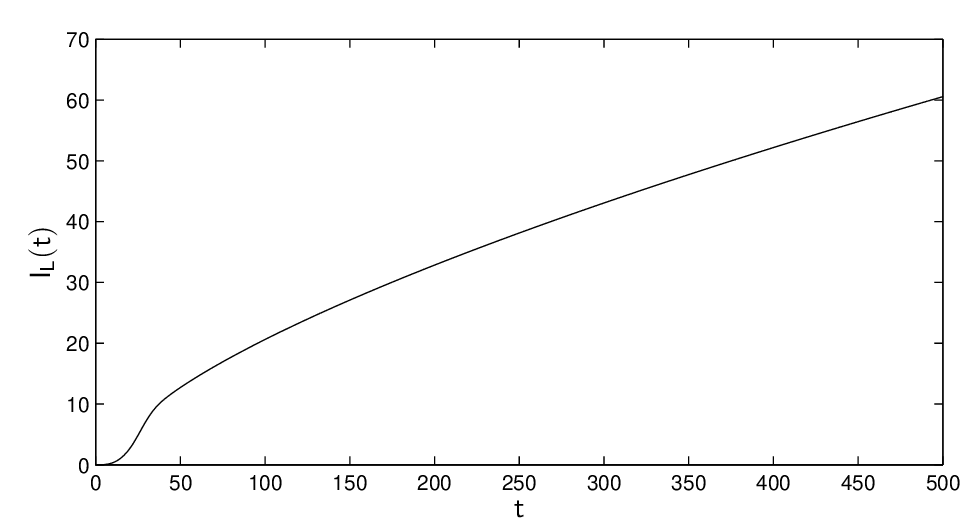}
\includegraphics[width=6cm]{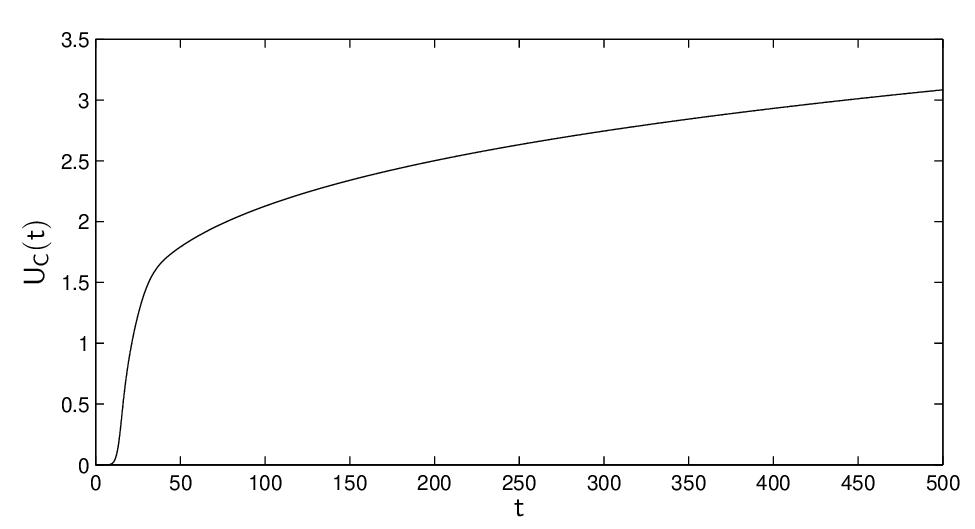}
\includegraphics[width=6cm]{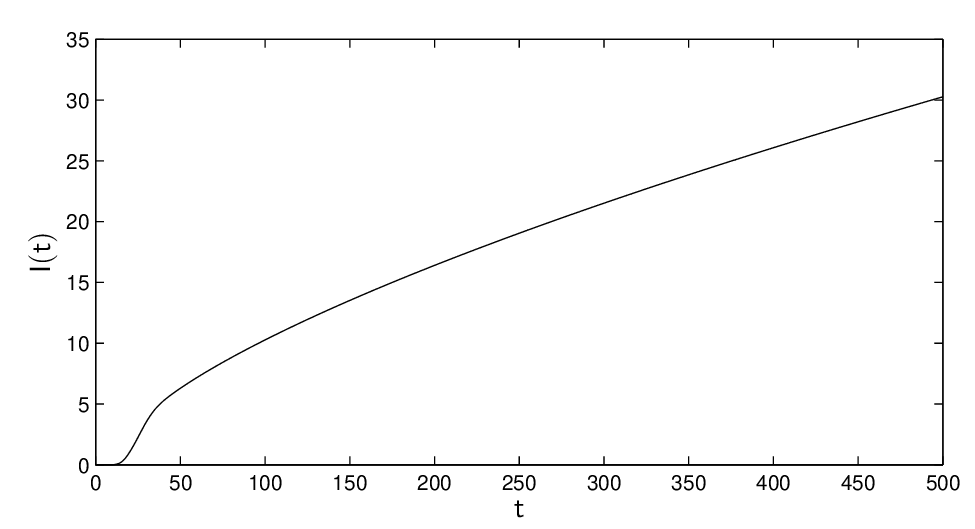}
\caption{The components of the numerical solution}\label{MM_Ex5_1-3k} \end{figure}

For the voltage of the triangular shape  $e(t)= 50-|t-50-100 k|$, $t\in [100\, k, 100+ 100\, k]$, $k\in {\{0\}\cup {\mathbb N}}$, and  $L=5\cdot 10^{-4}$, $C=5\cdot 10^{-7}$, $r=2$, $g=0.2$, $\varphi_0(x_1) = x_1^3$, $\varphi(x_3)= x_3^3$, $h(x_2)= x_2^3$, $\psi(x_1-x_3)= (x_1-x_3)^3$, $t_0 = 0$, $x_0 =(0,0,0)^T$, the solution is Lagrange stable. The solution components are presented in Figure~\ref{mTriang3h}.
\begin{figure}[H]%
\centering
\includegraphics[width=6cm]{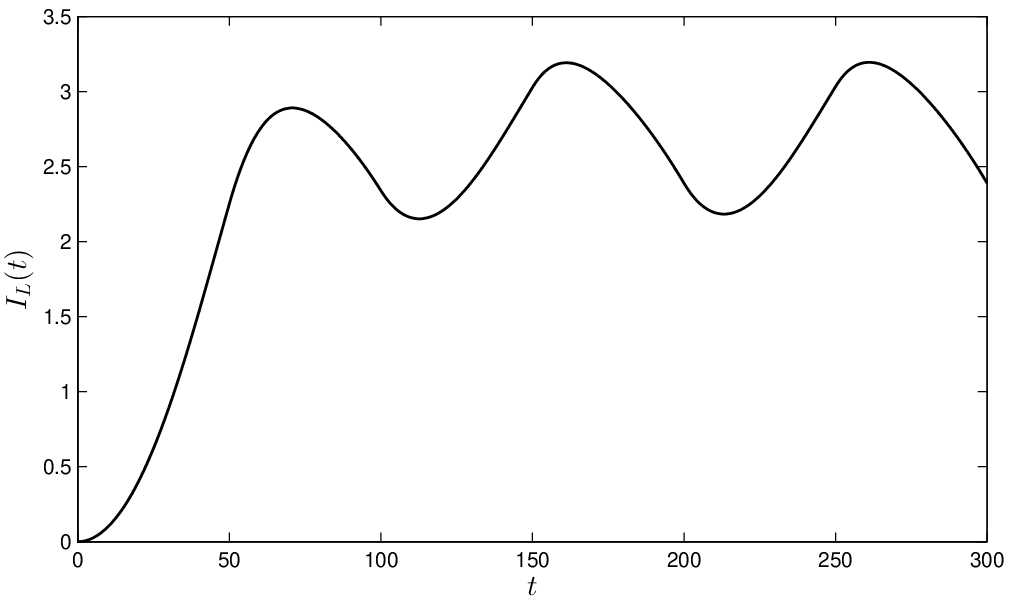}
\includegraphics[width=6cm]{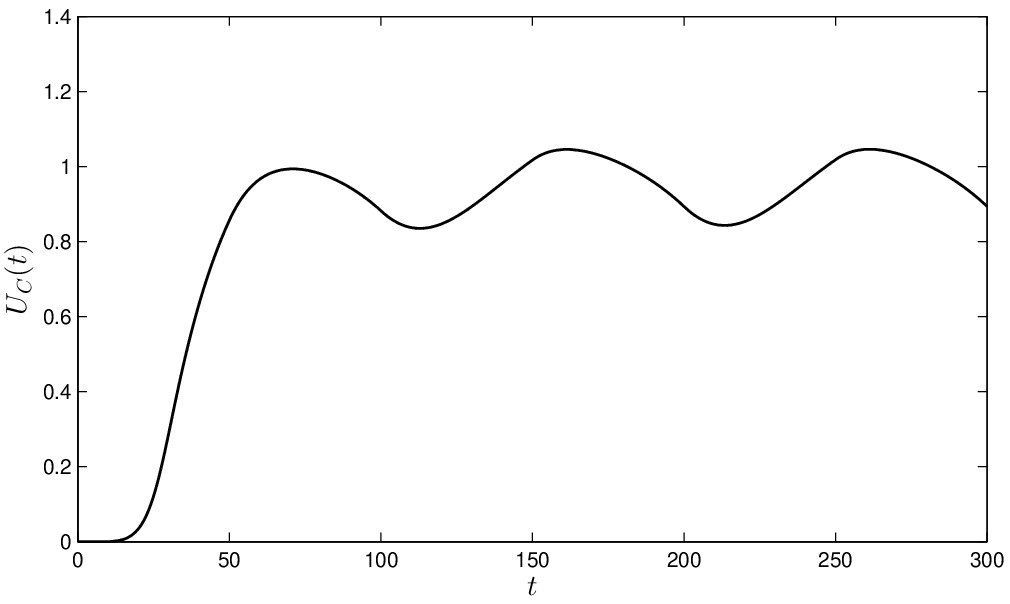}
\includegraphics[width=6cm]{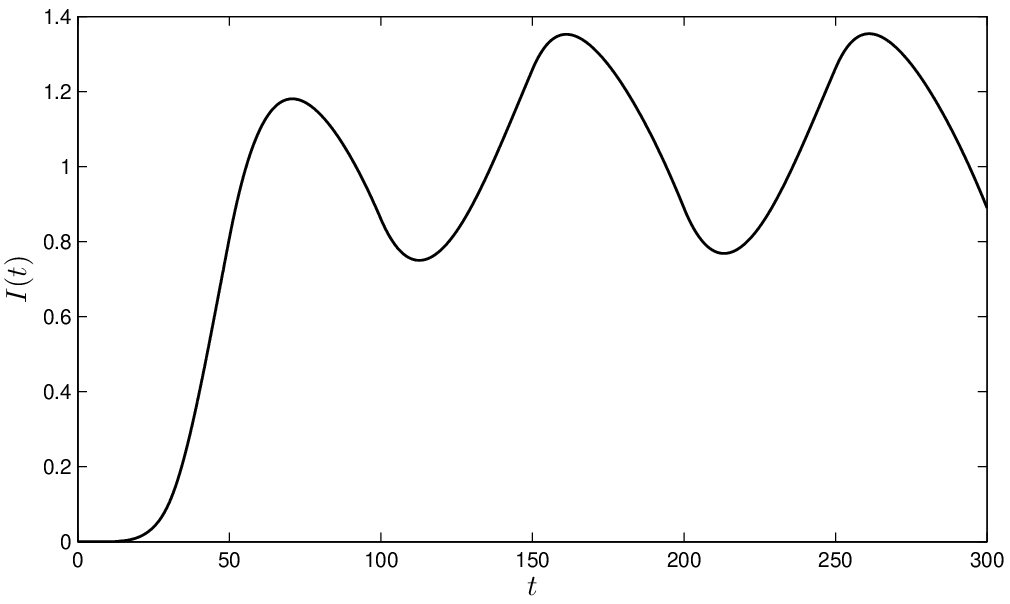}
\caption{The components of the numerical solution}\label{mTriang3h}
\end{figure}

Below is the example of a Lagrange-unstable solution (Figure~\ref{MM_Ex6_109-111}), which is blow-up in finite time (the norm of the solution tends to infinity on a finite time interval). Here $L=5\cdot 10^{-6}$, $C = 5\cdot 10^{-7}$, $r = 2$, $g =0.2$, $\varphi_0(x_1) =-x_1^2$, $\varphi(x_3)=x_3^3$, $h(x_2)=x_2^2$, $\psi(x_1-x_3)=(x_1-x_3)^3$, $e(t) = 2\, \sin t$, and $t_0 = 0$, $x_0 =(1, -6.5, 1.5)^T$.
\begin{figure}[H]%
\centering
\includegraphics[width=4.8cm]{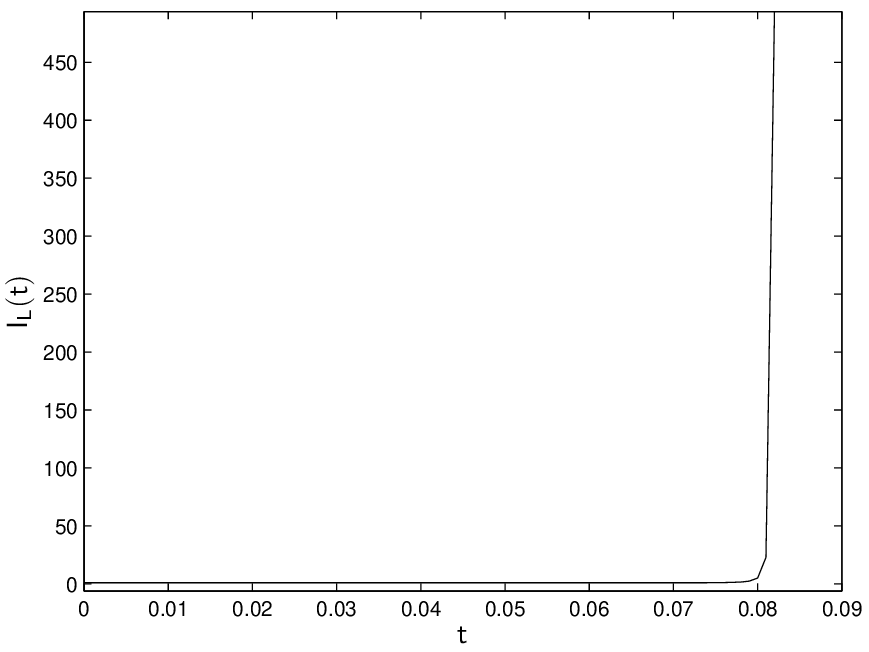}
\includegraphics[width=4.8cm]{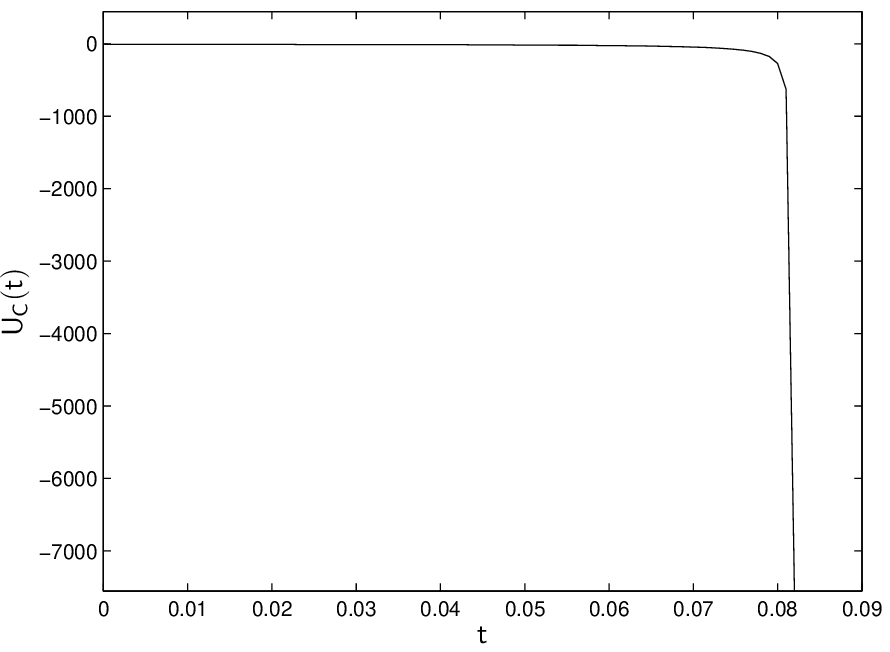}
\includegraphics[width=4.8cm]{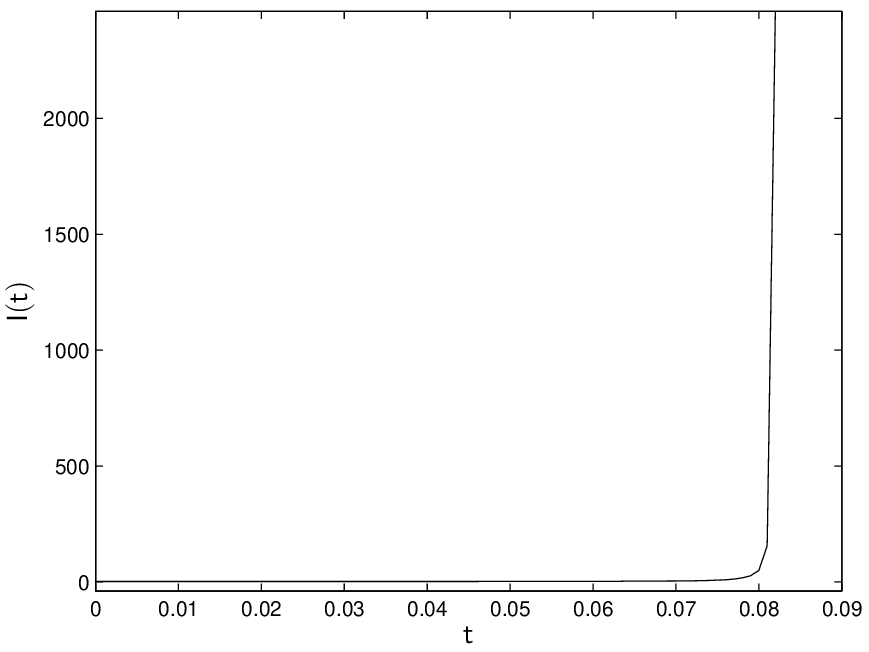}
\caption{The components of the numerical solution}\label{MM_Ex6_109-111}
\end{figure}

The analysis of the obtained numerical solutions verifies the results of theoretical studies.

Thus, the developed numerical methods allow one to carry out the numerical analysis of global dynamics for the mathematical models described by semilinear DAEs and to draw the conclusions about their evolutionary properties.   Also, the developed methods have a number of advantages, which have already been mentioned in Section~\ref{Intro}.

\section{Comparison of methods 1, 2}\label{CompareMeth}

Consider the semilinear DAE \eqref{DAE}, where the matrices $A$, $B$ and nonlinear function $f$ have the form \eqref{two-pole}, and $x=(x_1,x_2,x_3)^T=(I_L, U_C, I)^T$.    Let $L = 5\cdot 10^{-4}$, $C = 5\cdot 10^{-7}$, $r = 2$, $g =0.2$, $e(t) = \sin t$, $\varphi_0(x_1) = x_1^3$, $\varphi(x_3)= x_3^3$, $\psi(x_1-x_3)= (x_1-x_3)^3$, and $h(x_2)= x_2^3$.   Obviously, the initial data $t_0 = 0$, $x_0 =(0,0,0)^T$ satisfy the consistency condition $(t_0,x_0)\in L_0$, that is, $B P_2 x_0 = Q_2 f(t_0,x_0)$.  Below, it is shown how the plots of the components $x_1(t)=I_L(t)$, $x_2(t)=U_C(t)$ and $x_3(t)=I(t)$ of the solution $x(t) =(x_1(t),x_2(t),x_3(t))^T$ obtained by methods~1~and~2 change with the mesh refinement ($h=0.1,\,0.01,\,0.001,\,0.0001$).
\begin{figure}[H]%
\centering
\includegraphics[width=6.8cm]{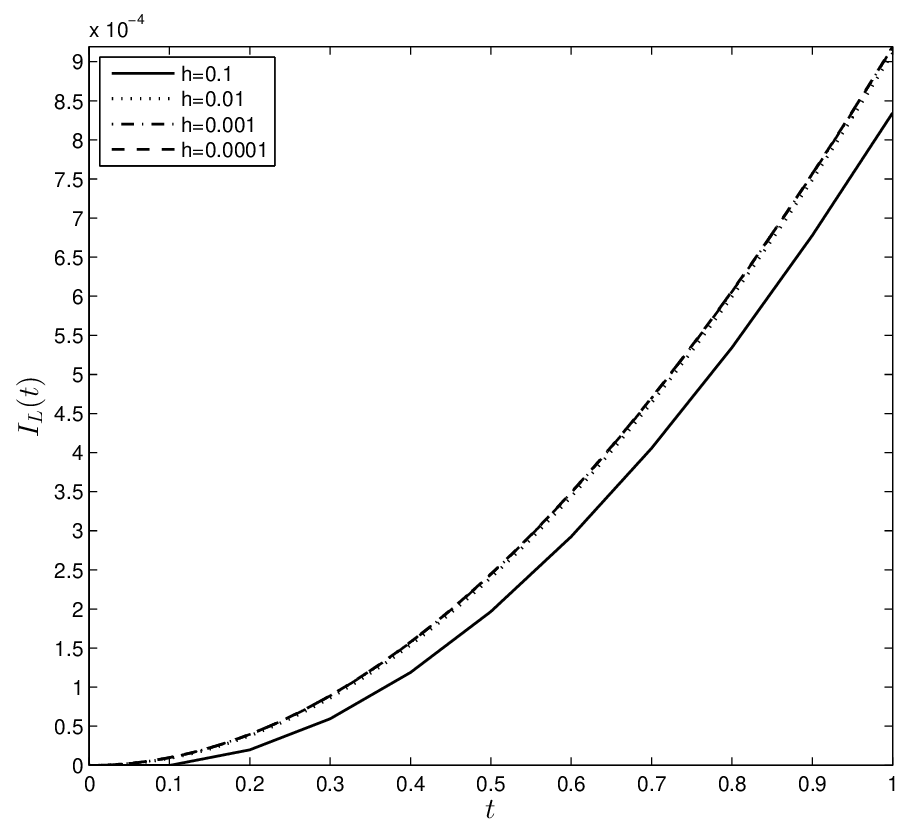}
\includegraphics[width=7cm]{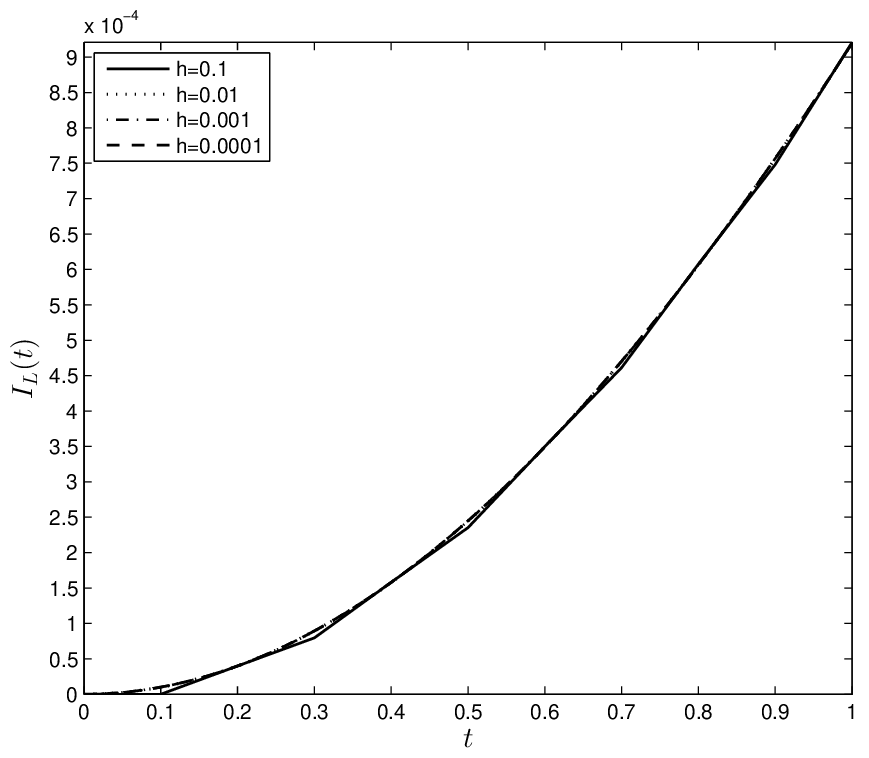}
\caption{The changes of the component $I_L(t)$ of the solution obtained by method~1~(left) and method~2~(right) when changing the step size $h=0.1,\,0.01,\,0.001,\,0.0001$}\label{modSinCom1}
\end{figure}
\begin{figure}[H]%
\centering
\includegraphics[width=6.8cm]{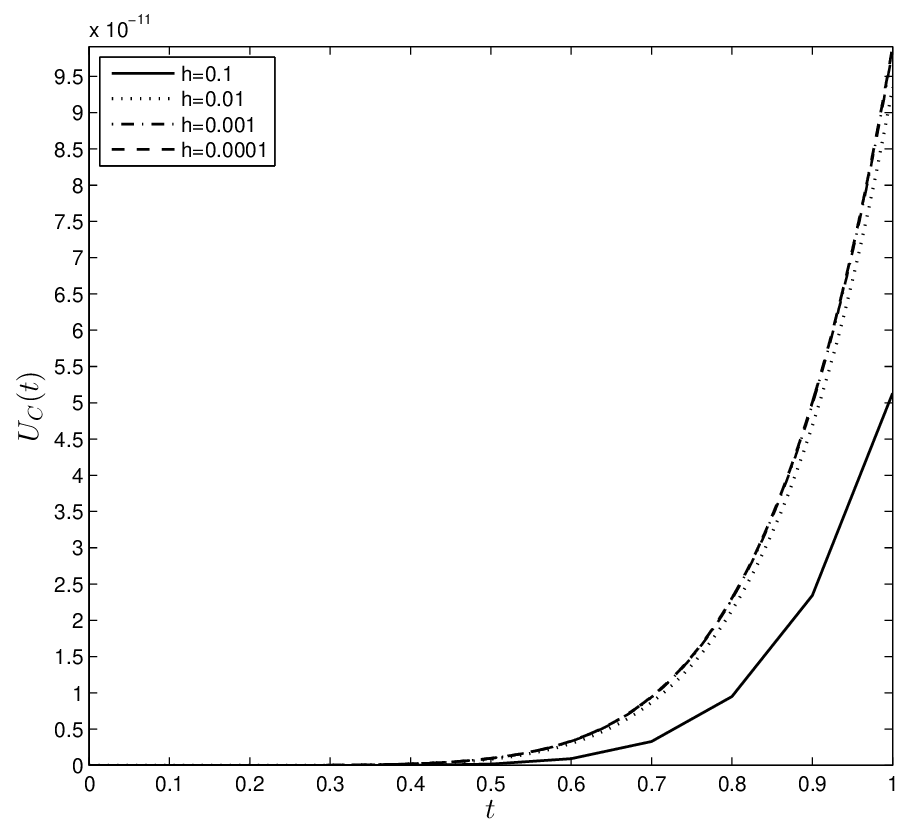}
\includegraphics[width=7cm]{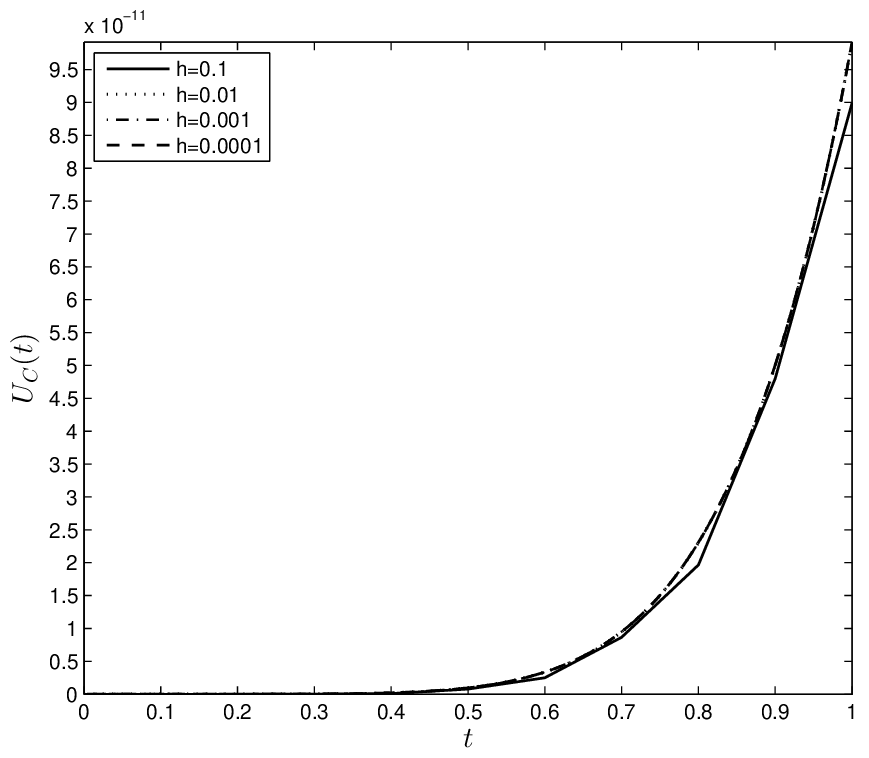}
\caption{The changes of the component $U_C(t)$ of the solution obtained by method~1~(left) and method~2~(right) when changing the step size $h=0.1,\,0.01,\,0.001,\,0.0001$}\label{modSinCom2}
\end{figure}
\begin{figure}[H]%
\centering
\includegraphics[width=6.8cm]{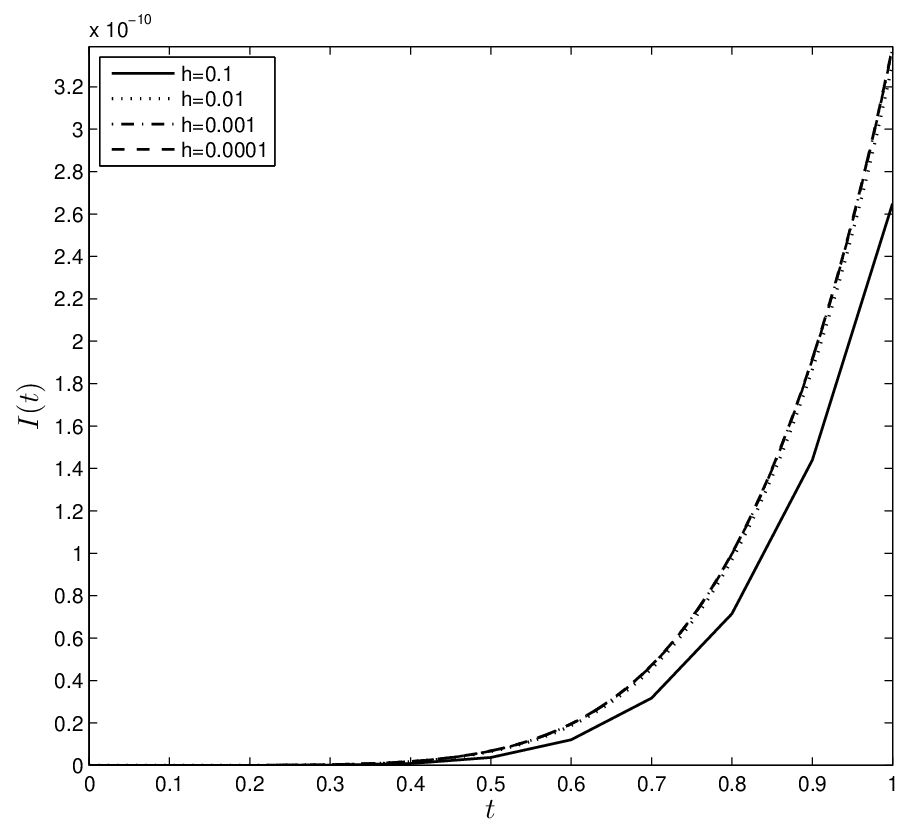}
\includegraphics[width=7cm]{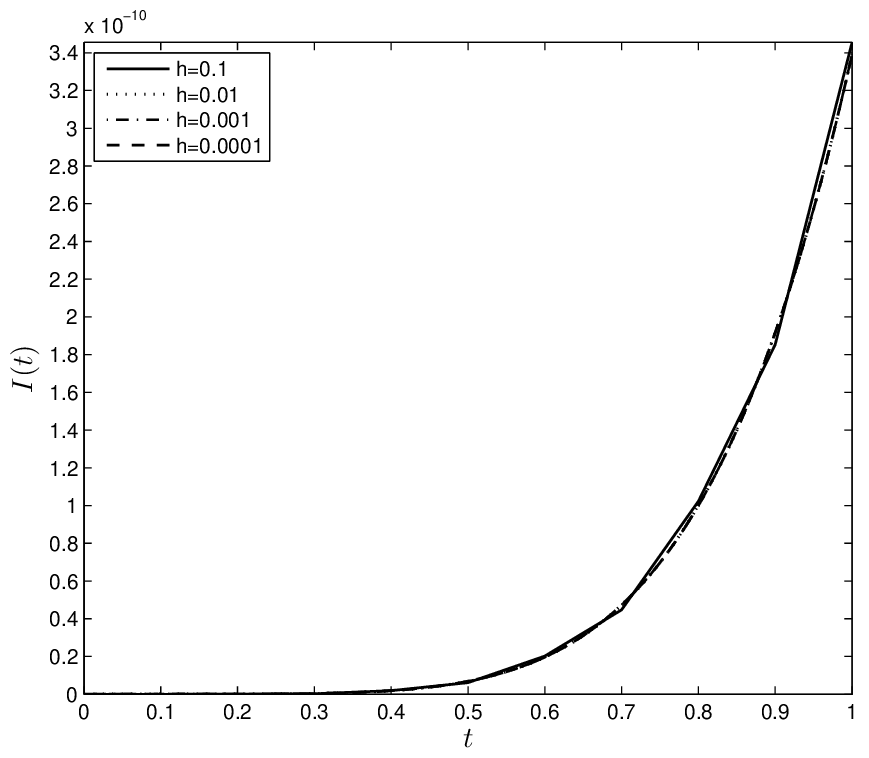}
\caption{The changes of the component $I(t)$ of the solution obtained by method~1~(left) and method~2~(right) when changing the step size $h=0.1,\,0.01,\,0.001,\,0.0001$}\label{modSinCom3}
\end{figure}
For the solution components $I_L(t)$ and $U_C(t)$ shown in Figures~\ref{modSinCom1} and~\ref{modSinCom2}, the values at the points ${t=0.2,\,0.4,\,0.6,\,0.8,\,1}$ are presented below in Tables~1 and~2.
\begin{center}
\textbf{Table 1}\; The approximate values of $I_L(t)$ at the points $t=0.2,\,0.4,\,0.6,\,0.8,\,1$\medskip

\noindent\renewcommand{\tabcolsep}{0.1cm}\begin{tabular}{|c|c|c|c|c|c|c|}%
 \hline
   & \multicolumn{2}{c|}{$I_L(0.2)$} & \multicolumn{2}{c|}{$I_L(0.4)$} & \multicolumn{2}{c|}{$I_L(0.6)$} \\
 \cline{2-7} \rule{0cm}{0.4cm}
   $h$ & Method 1 & Method 2 & Method 1 & Method 2 & Method 1 & Method 2 \\
 \hline \rule{0cm}{0.4cm}
     $10^{-1}$   &  1.9967e-05  & 3.9933e-05 & 1.1880e-04  & 1.5814e-04 &  2.9257e-04  &  3.4991e-04 \\ 
 \hline \rule{0cm}{0.4cm}
     $10^{-2}$   &  3.7880e-05 & 3.9868e-05  & 1.5398e-04 & 1.5788e-04 & 3.4368e-04 &  3.4933e-04 \\ 
 \hline \rule{0cm}{0.4cm}
     $10^{-3}$  &  3.9668e-05 &  3.9867e-05 & 1.5749e-04 & 1.5788e-04 & 3.4876e-04 &  3.4933e-04 \\ 
 \hline \rule{0cm}{0.4cm}
     $10^{-4}$ &  3.9847e-05 &  3.9867e-05 & 1.5784e-04 & 1.5788e-04 & 3.4927e-04 & 3.4933e-04  \\  
 \hline
 \end{tabular}
\end{center}
\begin{center}
\noindent \renewcommand{\tabcolsep}{0.1cm}\begin{tabular}{|c|c|c|c|c|}%
 \hline
   & \multicolumn{2}{c|}{$I_L(0.8)$} & \multicolumn{2}{c|}{$I_L(1)$}\\
 \cline{2-5} \rule{0cm}{0.4cm}
    $h$ & Method 1 & Method 2 & Method 1 & Method 2 \\
 \hline \rule{0cm}{0.4cm}
    $10^{-1}$    &  5.3435e-04  & 6.0760e-04 & 8.3448e-04  & 9.2093e-04  \\
 \hline \rule{0cm}{0.4cm}
    $10^{-2}$  &  5.9941e-04 & 6.0660e-04  & 9.1097e-04 & 9.1941e-04 \\
 \hline \rule{0cm}{0.4cm}
    $10^{-3}$  & 6.0587e-04 & 6.0659e-04  & 9.1855e-04 & 9.1940e-04 \\
 \hline \rule{0cm}{0.4cm}
    $10^{-4}$ & 6.0651e-04  & 6.0659e-04  & 9.1931e-04 & 9.1940e-04 \\
 \hline
 \end{tabular}
\end{center}
\begin{center}
\textbf{Table 2}\; The approximate values of $U_C(t)$ at the points $t=0.2,\,0.4,\,0.6,\,0.8,\,1$\medskip

\noindent \renewcommand{\tabcolsep}{0.1cm}\begin{tabular}{|c|c|c|c|c|c|c|}%
 \hline
   & \multicolumn{2}{c|}{$U_C(0.2)$} & \multicolumn{2}{c|}{$U_C(0.4)$} & \multicolumn{2}{c|}{$U_C(0.6)$} \\
 \cline{2-7} \rule{0cm}{0.4cm}
   $h$ & Method 1 & Method 2 & Method 1 & Method 2  & Method 1 & Method 2 \\
 \hline \rule{0cm}{0.4cm}
     $10^{-1}$ & 0 & 0 & 2.1963e-14 & 9.6804e-14 & 9.2137e-13 & 2.4827e-12 \\
 \hline \rule{0cm}{0.4cm}
     $10^{-2}$ & 1.2255e-15  & 1.7053e-15  & 1.7884e-13 & 2.1045e-13 & 3.0209e-12 &  3.3564e-12  \\
 \hline \rule{0cm}{0.4cm}
     $10^{-3}$ &  1.6937e-15 & 1.7522e-15  & 2.0837e-13 & 2.1184e-13 & 3.3303e-12 &  3.3659e-12  \\
 \hline \rule{0cm}{0.4cm}
     $10^{-4}$ & 1.7468e-15  & 1.7527e-15 & 2.1150e-13 & 2.1185e-13 & 3.3624e-12 &  3.3660e-12 \\
 \hline
 \end{tabular}
\end{center}
\begin{center}
\noindent \renewcommand{\tabcolsep}{0.1cm}\begin{tabular}{|c|c|c|c|c|}%
 \hline
   & \multicolumn{2}{c|}{$U_C(0.8)$} & \multicolumn{2}{c|}{$U_C(1)$}\\
 \cline{2-5} \rule{0cm}{0.4cm}
    $h$ & Method 1 & Method 2 & Method 1 & Method 2 \\
 \hline \rule{0cm}{0.4cm}
    $10^{-1}$   & 9.5030e-12 & 1.9667e-11 & 5.1291e-11 & 8.9939e-11  \\
 \hline \rule{0cm}{0.4cm}
    $10^{-2}$   & 2.1361e-11  & 2.3049e-11 & 9.3469e-11 & 9.9068e-11 \\
 \hline \rule{0cm}{0.4cm}
    $10^{-3}$ &  2.2908e-11 & 2.3084e-11  & 9.8584e-11 & 9.9162e-11 \\
 \hline \rule{0cm}{0.4cm}
    $10^{-4}$ & 2.3067e-11  &  2.3085e-11 & 9.9105e-11 & 9.9163e-11 \\
 \hline
 \end{tabular}
 \end{center}
Figures~\ref{modSinCom1}--\ref{modSinCom3} and Tables~1,~2 show that method~2 (the method \eqref{Impmet1}--\eqref{Impmet4})  converges faster to the exact solution than method~1 (the method \eqref{met1}--\eqref{met4}).

However, when choosing a more suitable method for solving a certain problem,  one should also take into account the quantitative characteristics of stability of the methods, namely, the values of the coefficients $h M_1$, $g(h)=\|I-h G^{-1} B\| +h M_1\le 1+h(\|G^{-1} B\| +M_1)$ in \eqref{num8} (for method~1) and $2h M_1$, $\|\Hat g(h)\|=1+2h(\|G^{-1} B\| +M_1)$ in \eqref{num8new} (for method~2) and the length of the interval $[t_0,T]$ on which the computation is performed.   The larger the quantitative characteristic of stability of a method (the quantities characterizing the stability of the method), the smaller step size should be chosen to achieve the required accuracy of computations. In turn, the choice of a smaller step size leads to larger roundoff errors.   In this connection, to carry out a computation with a given accuracy using method~2, it may be necessary to choose a much smaller step size and consequently to spend much more time than using method~1.  The differences in the mentioned coefficients from \eqref{num8} and \eqref{num8new} arise because the derivative is approximated by a forward difference, obtained from the representation \eqref{Taylor1}, in method~1 and by a centered difference, obtained from \eqref{Taylor3}, in method~2.  Similar differences arising when using such approximations for the linear differential equation $\dot{x}+Ax=f(t)$ are described in detail in [\cite{GodRyab}, Chapter~5].  It is clear that for small values of $\|G^{-1} B\|$, $M_1$ or a sufficiently small interval $[t_0,T]$ there are no distinct differences in stability.

In particular, for the considered DAE \eqref{DAE}, \eqref{two-pole}, the larger $L$, $C$, $r$ and the smaller $g$, $M_1$, the less differences in stability.   Also, the differences in stability are small on a sufficiently small time interval $[t_0,T]$ and Figures~\ref{modSinCom1}--\ref{modSinCom3} confirm this.  If we compute the solution for the above quantities and functions on a larger time interval, then method~2 starts to lose stability.   This is not visible in Figure~\ref{mSinC1}, but this is already noticeable in Figure~\ref{mSinC2-3}.  Therefore, it is necessary to decrease the step size, as shown in Figure~\ref{mSinC4h2-3}.
\begin{figure}[H]%
\centering
\includegraphics[width=6.6cm]{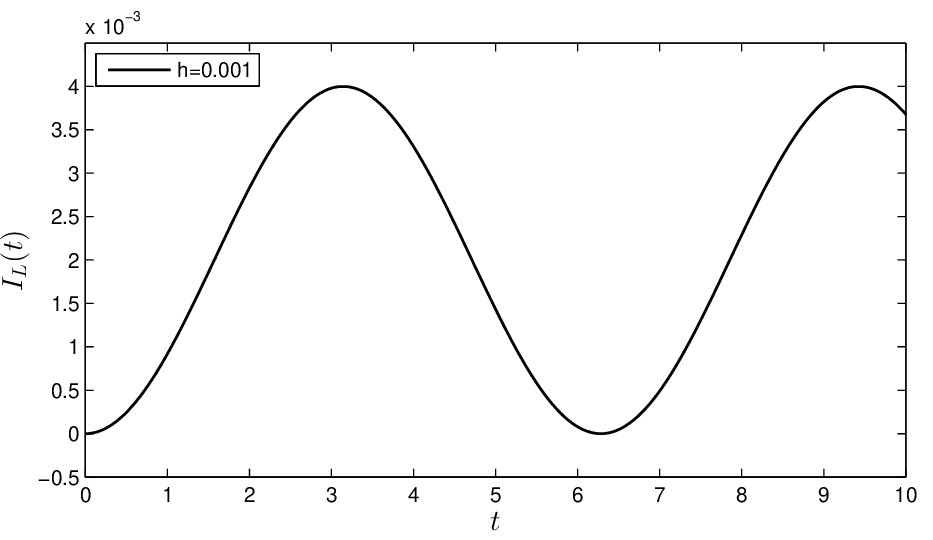}
\includegraphics[width=6.6cm]{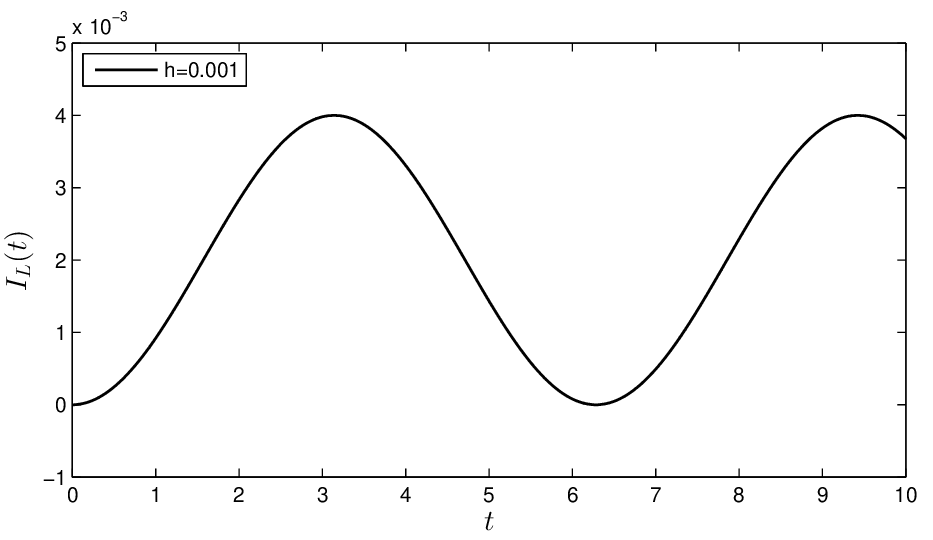}
\caption{The component $I_L(t)$ of the solution obtained by method~1 (left) and method~2 (right); the step size $h=0.001$}\label{mSinC1}
\end{figure}
\begin{figure}[H]%
\centering
\includegraphics[width=6.6cm]{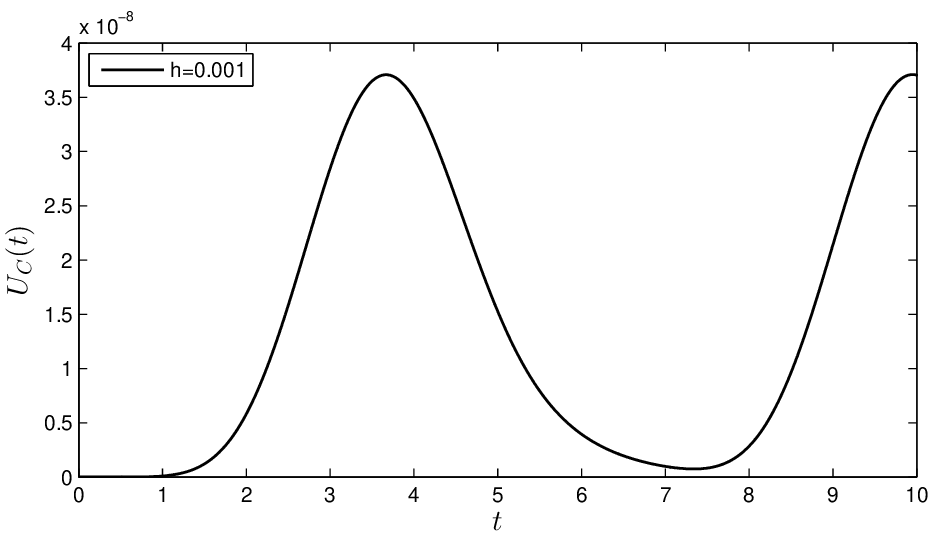}
\includegraphics[width=6.6cm]{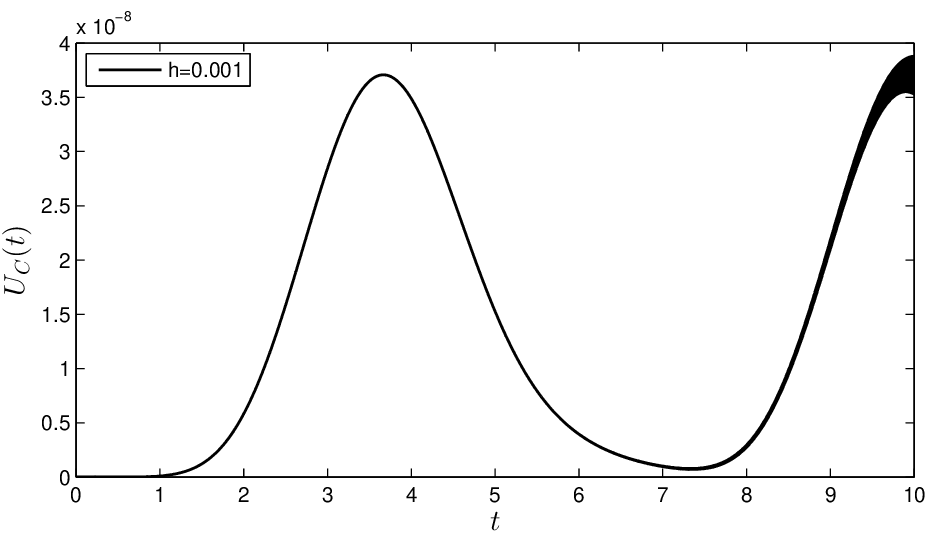}

\includegraphics[width=6.6cm]{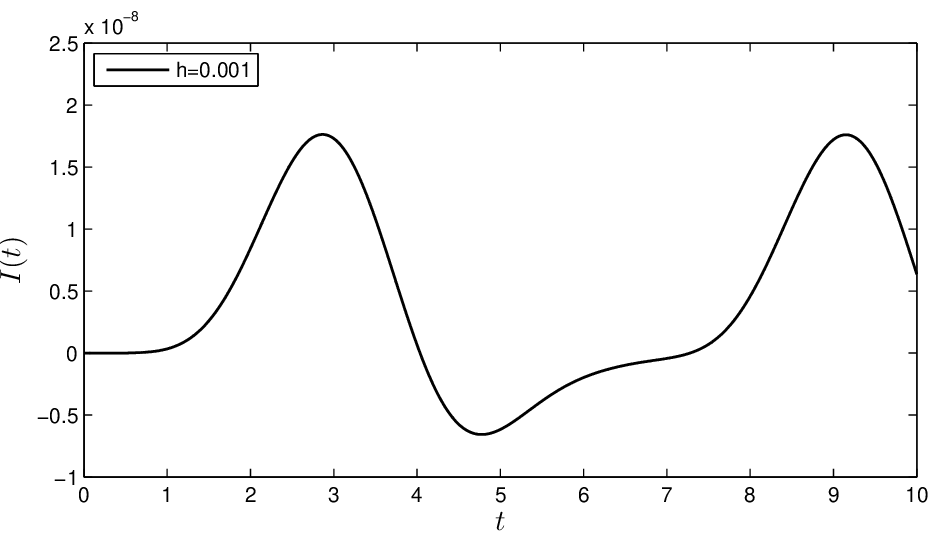}
\includegraphics[width=6.6cm]{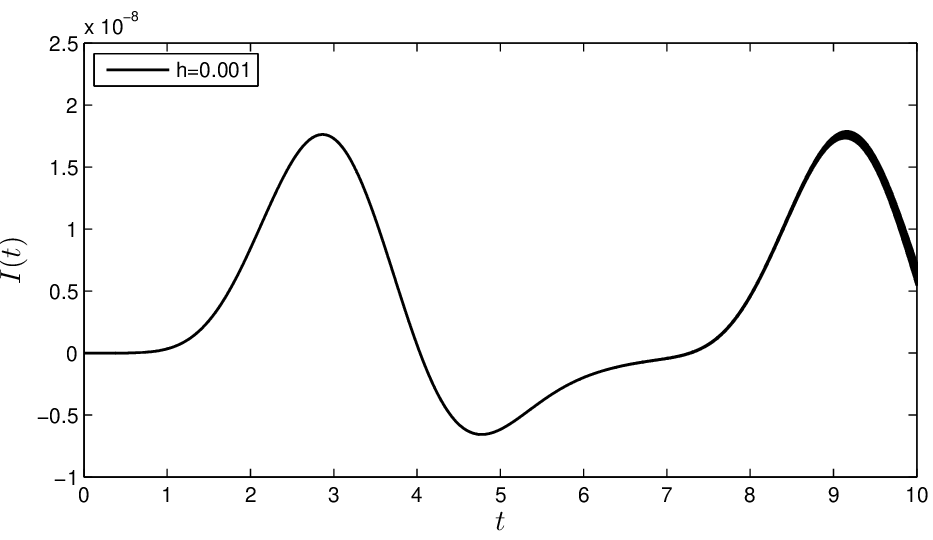}
\caption{The components $U_C(t)$, $I(t)$ of the solution obtained by method~1 (left) and method~2 (right); the step size $h=0.001$}\label{mSinC2-3}
\end{figure}
\begin{figure}[H]%
\centering
\includegraphics[width=6.5cm]{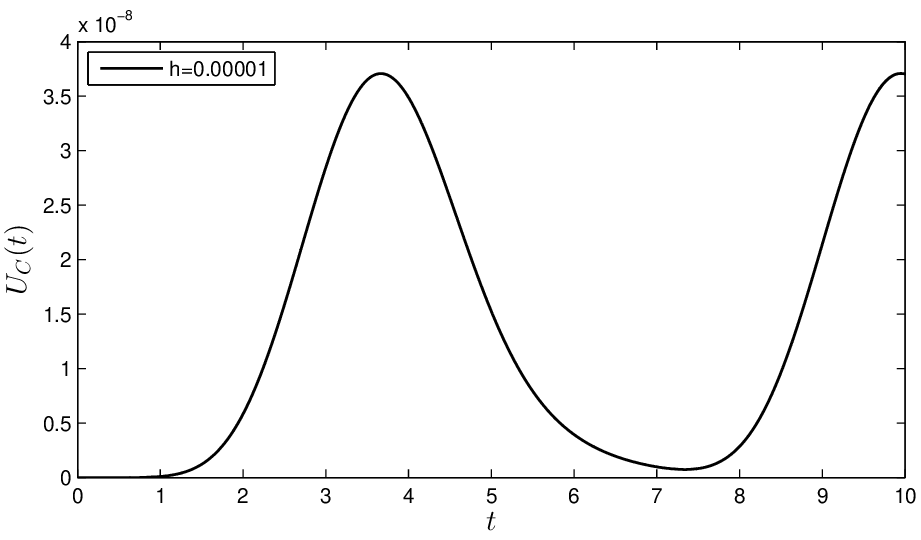}
\includegraphics[width=6.5cm]{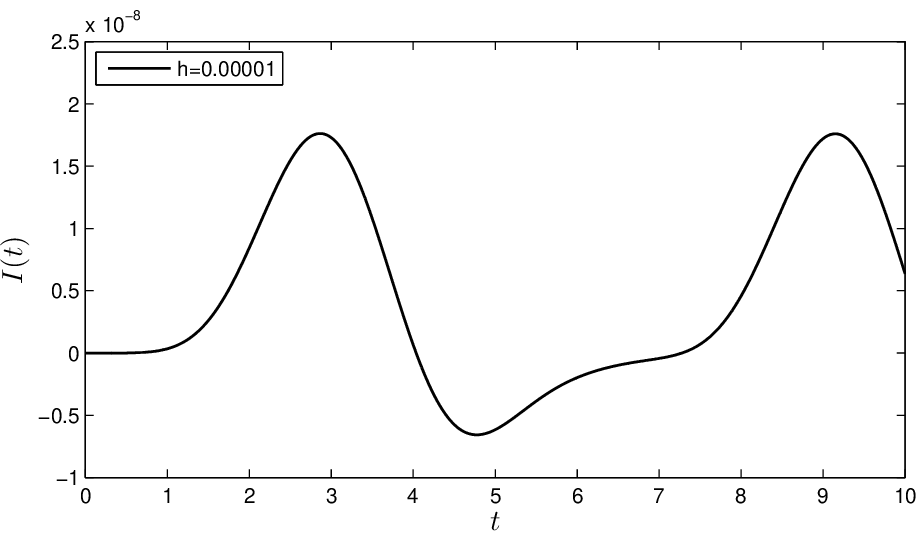}
\caption{The components $U_C(t)$, $I(t)$ obtained by method~2; the step size $h=0.00001$}\label{mSinC4h2-3}
\end{figure}
However, for the larger value of $r=4$ (the value is doubled) and the smaller value of $g=0.1$ (the value is decreased by 2 times), method~2 is stable for the same step size and time interval that were originally (see Figure~\ref{mSinC2new}) and, accordingly, there are no visible differences in the stability of the methods.
\begin{figure}[H]%
\centering
\includegraphics[width=6.5cm]{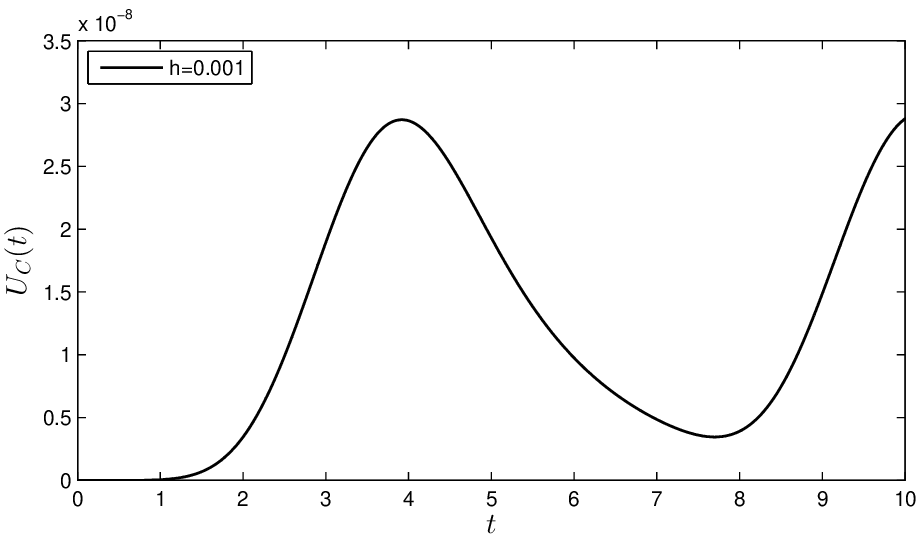}
\includegraphics[width=6.5cm]{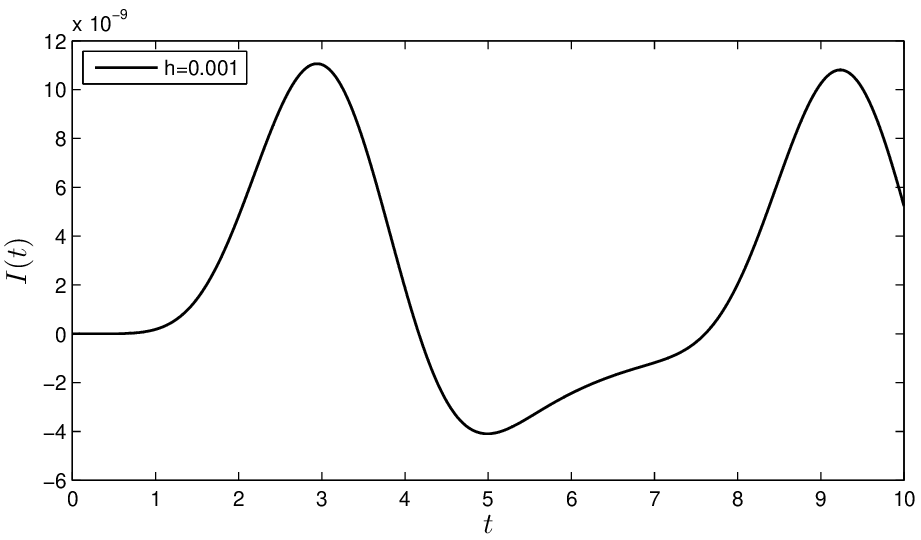}
\caption{The components $U_C(t)$, $I(t)$ obtained by method~2 for the new $r$, $g$ and the original step size $h=0.001$}\label{mSinC2new}
\end{figure}

\emph{We can draw the following conclusions about the effectiveness of application of the methods in various situations}.   When the parameters in the DAE~\eqref{DAE} are chosen arbitrarily, method~2 is better for computing or refining a solution on sufficiently small time intervals (or on the parts of a large interval),  since it has a higher order of accuracy with respect to the step size $h$.   For large time intervals, it is better to use method~1, since it is  more stable (has a smaller coefficient characterizing stability). However, once again note that method~2 can be more effective on a large time interval if the parameters, which are chosen on the basis of the conditions of the problem being solved, do not have a large negative effect on the stability.

\section{Conclusions and outlooks}

To solve semilinear DAEs, the combined methods of the first and second orders of accuracy were obtained in this work. The correctness and convergence of the methods were proved.  The developed methods enable to find a solution on any given time interval  and require weaker restrictions for the nonlinear part of the equation than other known methods.  Also, the effectiveness of the developed methods is due to the possibility to numerically find the spectral projectors, using the formulas \eqref{ProjRes}, which enables to numerically solve and analyze a semilinear DAE in the original form without additional analytical transformations.  In Section~\ref{Appl}, the numerical analysis of the semilinear DAE describing the mathematical model for a nonlinear electrical circuit has been carried out.  The results of the numerical analysis verify the results of the theoretical studies of global dynamics for the mathematical model.   The plots of numerical solutions, demonstrating the evolutionary properties of the considered model, have been presented.

In the future, it is planned to improve the methods obtained in this paper and to extend them to a certain class of semilinear DAEs with a singular operator pencil (the theorem on the unique global solvability and the Lagrange stability for this class of equations was proved in \cite{Fil.UMJ}).

\section*{Acknowledgement}
 The publication is based on the research provided by grant support of the State Fund for Fundamental Research of Ukraine (project No.~F83/82-2018), and the National Academy of Sciences of Ukraine (project ``Qualitative, asymptotic and numerical analysis of various classes of differential equations and dynamical systems, their classification, and practical application'', state registration number~0119U102376).




\end{document}